\newtheorem{lemma}{Lemma}[section]
\newtheorem{theorem}[lemma]{Theorem}
\theoremstyle{remark}
\newtheorem{example}[lemma]{Example}
\newcommand{\argmin}{\operatorname*{argmin}}
\newcommand{\cond}{\operatorname{cond}}
\newcommand{\diag}{\operatorname{diag}}
\newcommand{\Int}{\operatorname{Int}}
\renewcommand{\Re}{\operatorname{Re}}
\numberwithin{equation}{section}
\title[DSM-QR: derivation and its mathematical analysis]{Well-conditioned dipole-type method of fundamental solutions: derivation and its mathematical analysis}
\author{Koya Sakakibara}
\address[K.~Sakakibara]{Faculty of Mathematics and Physics, Institute of Science and Engineering, Kanazawa University, Kakuma-machi, Kanazawa-shi, Ishikawa 920-1192, Japan; RIKEN iTHEMS, 2-1 Hirosawa, Wako-shi, Saitama 351-0198, Japan}
\email{ksakaki@se.kanazawa-u.ac.jp}
\begin{document}

\begin{abstract}
    In this paper, we examine the dipole-type method of fundamental solutions, which can be conceptualized as a discretization of the "singularity-removed" double-layer potential. 
    We present a method for removing the ill-conditionality, which was previously considered a significant challenge, and provide a mathematical analysis in the context of disk regions. 
    Moreover, we extend the proposed method to the general Jordan region using conformal mapping, demonstrating the efficacy of the proposed method through numerical experiments.
\end{abstract}
\subjclass{65N80, 65N35, 65N12, 35J05, 35J08}
\keywords{Method of fundamental solutions, Double-layer potential, QR decomposition, Ill-conditionality}
\maketitle

\tableofcontents

\section{Introduction}

The method of fundamental solutions (MFS, for short) is a mesh-free numerical solver for partial differential equations, and it has been actively used especially in the field of engineering.
Its idea is very simple.
Let us consider the boundary-value problem for the linear partial differential operator $\mathcal{L}$; more precisely,
\begin{equation}
    \begin{dcases*}
        \mathcal{L}u=0&in $\Omega$,\\
        \mathcal{B}g=f&on $\partial\Omega$,
    \end{dcases*}
    \label{eq:model}
\end{equation}
where $\Omega\subset\mathbb{R}^2$ denotes a region such that $\mathbb{R}^2\setminus\overline{\Omega}\neq\emptyset$.
The second equation represents the linear boundary condition such as Dirichlet ($\mathcal{B}=\operatorname{id}$), Neumann ($\mathcal{B}=\partial_\nu$), or Robin ($\mathcal{B}=\operatorname{id}+\gamma\partial_\nu$).
Suppose that the partial differential operator $\mathcal{L}$ has a fundamental solution $E$; that is, $E$ satisfies
\begin{equation}
    \mathcal{L}E=\delta
\end{equation}
in the distributional sense.
The MFS offers an approximate solution to the problem~\eqref{eq:model} as in the following procedure.
\begin{enumerate}
    \item Choose $N\in\mathbb{N}$, and arrange the singular points $\{y_k\}_{k=1}^N\subset\mathbb{R}^2\setminus\overline{\Omega}$ ``suitably''.
    \item Seek an approximate solution $u^{(N)}$ of the form
    \begin{equation}
        u^{(N)}(x)=\sum_{k=1}^NQ_kE(x-y_k),\quad x\in\mathbb{R}^2\setminus\{y_1,y_2,\ldots,y_N\}.
    \end{equation}
    \item Determine the coefficients $\{Q_k\}_{k=1}^N$ by the collocation method; that is, choose $P\in\mathbb{N}$ so that $P\ge N$, arrange the collocation points $\{x_j\}_{j=1}^P\subset\partial\Omega$ ``suitably'', and impose the collocation equations:
    \begin{equation}
        u^{(N)}(x_j)=f(x_j),\quad
        j=1,2,\ldots,P,
    \end{equation}
    which is equivalent to the linear system
    \begin{equation}
        \mathbf{G}\bm{Q}=\bm{f},
        \label{eq:col_eq-linear}
    \end{equation}
    where
    \begin{equation}
        \mathbf{G}\coloneqq(\mathcal{B}E(x_j-y_k))_{\substack{j=1,2,\ldots,P\\k=1,2,\ldots,N}}\in\mathbb{R}^{P\times N},\quad
        \bm{Q}\coloneqq(Q_k)_{k=1}^N\in\mathbb{R}^N,\quad
        \bm{f}\coloneqq(f(x_j))_{j=1}^P\in\mathbb{R}^P.
    \end{equation}
\end{enumerate}

As previously stated, in contrast to the finite element method and the finite difference method, the MFS does not necessitate the partitioning of the region into a mesh. 
An approximate solution can be derived by simply positioning points in a suitable manner on the boundary and exterior of the region. 
Consequently, the MFS can be regarded as a mesh-free numerical method.
It has been demonstrated in numerous prior studies that when the singular and collocation points are correctly positioned and the boundary data is real-analytic, the approximation error exhibits a noteworthy property of decaying exponentially with respect to the number of points~\cite{katsurada1988mathematical,katsurada1989mathematical,katsurada1990asymptotic,katsurada1994charge,murota1995comparison,katsurada1996collocation,li2005convergence,smyrlis2006method,barnett2008stability,li2008convergence,li2009method,kazashi2014error,sakakibara2017asymptotic,sakakibara2017method,ei2022method}.
Nevertheless, the outcomes of these mathematical investigations remain inadequate. For instance, there is a paucity of findings pertaining to the existence of approximate solutions in multiply-connected regions in the plane, error estimation, and analysis in high-dimensional spaces.
Furthermore, the actual numerical computation with the MFS is subject to the issue of ill-conditionality. 
This refers to the fact that the condition number of the coefficient matrix $\mathbf{G}$ of a linear system~\eqref{eq:col_eq-linear} increases exponentially with respect to the number of points, indicating that the linear system~\eqref{eq:col_eq-linear} is exceedingly challenging to solve. 
Numerous prior studies have devised MFS-specific pre-processing techniques based on numerical linear algebra (see, for instance, \cite{branden2005preconditioners,branden2007discrete}). 
Nevertheless, the issue of ill-conditionality persists, and it has been identified as a significant impediment in actual numerical computation.

In light of these considerations, Antunes put forth an alternative approach in his paper~\cite{antunes2018reducing}, namely, modifying the basis function. 
This concept was previously proposed in \cite{fornberg2007stable} within the context of the radial basis function method.

The infinite series expansion obtained by the Taylor series expansion of the basis function is truncated by a finite number of terms (e.g., up to the $M$-th order terms) to obtain
\begin{equation}
    \Theta(r,\theta)=\mathbf{B}\mathbf{D}\mathbf{F}(r,\theta),
\end{equation}
where the singular points are arranged as $y_k=\varepsilon^{-1}(\cos\phi_k,\sin\phi_k)^\top$ and 
\begin{align}
    \mathbf{B}&\coloneqq\begin{pmatrix}
        1&\sin\phi_1&\cos\phi_1&\cdots&\sin M\phi_1&\cos M\phi_1\\
        1&\sin\phi_2&\cos\phi_2&\cdots&\sin M\phi_2&\cos M\phi_2\\
        \vdots&\vdots&\vdots&&\vdots&\vdots\\
        1&\sin\phi_N&\cos\phi_N&\cdots&\sin M\phi_N&\cos M\phi_N
    \end{pmatrix}\in\mathbb{R}^{N\times(2M+1)},\\
    \mathbf{D}&\coloneqq\diag\left(
        \log\varepsilon,\varepsilon,\varepsilon,\ldots,\frac{\varepsilon^M}{M},\frac{\varepsilon^M}{M}
    \right)\in\mathbb{R}^{(2M+1)\times(2M+1)},\\
    \bm{F}(r,\theta)&\coloneqq\begin{pmatrix}
        1\\
        r\sin\theta\\
        r\cos\theta\\
        \vdots\\
        r^M\cos M\theta\\
        r^M\sin M\theta
    \end{pmatrix}\in\mathbb{R}^{2M+1},
\end{align}
and $M$ is chosen to satisfy $2M+1>N$.
In this matrix representation, matrix $\mathbf{B}$ represents a well condition, whereas the ill-conditionality mainly stems from matrix $\mathbf{D}$. 
Accordingly, the matrix $\mathbf{B}$ is decomposed via the QR algorithm to yield the representation $\mathbf{B}=\mathbf{Q}\mathbf{R}$. 
The $N$-th leading principal submatrix of $\mathbf{D}$ is subsequently designated as $\mathbf{D}_N$. 
The new basis function is then expressed as
\begin{equation}
    \bm{\Psi}(r,\theta)
    =(\psi_k(r,\theta))_{k=1}^N
    \coloneqq\mathbf{D}_N^{-1}\mathbf{Q}^\top\bm{\Theta}(r,\theta)
    =\mathbf{D}_N^{-1}\mathbf{R}\mathbf{D}\bm{F}(r,\theta).
\end{equation}
Subsequently, he put forth the proposition of MFS-QR as a MFS-based system, founded upon this novel basis function. 
The approximate solution is expressed in the form of
\begin{equation}
    u^{(N)}(x)=u^{(N)}(r,\theta)=\sum_{k=1}^NQ_k\psi_k(r,\theta),\qquad
    x=\begin{pmatrix}
        r\cos\theta\\
        r\sin\theta
    \end{pmatrix}.
\end{equation}
The coefficients of the linear combination are determined by the collocation method. 

In paper~\cite{antunes2018reducing}, it was numerically shown that the ill-conditionality of the coefficient matrix can be completely eliminated for the disk domain, and that for other domains, the ill-conditionality cannot be completely eliminated, but can be reduced to some extent. 
Later, in paper~\cite{antunes2022well-conditioned}, a method based on singular value decomposition (MFS-SVD) instead of QR decomposition was developed, and it was numerically confirmed that the ill-conditionality can be removed when the domain is star-shaped. 
It should be noted that in both cases, the order of convergence is the same as that of the original MFS. 
In other words, MFS-QR and MFS-SVD achieve the exponential decay of errors, which is one of the major advantages of MFS, while eliminating one of the major problems of MFS, i.e., the problem of ill-conditionality. 
Therefore, it is expected that MFS-QR and MFS-SVD, rather than ordinary MFS, will become more important numerical methods in the future, but unfortunately, there are no theoretical guarantees to support the numerical experimental results obtained in previous studies.

It is also important to consider whether the construction of the approximate solution of the MFS is a sensible approach in the first place. 
In a sense, the approximate solution obtained through the MFS can be regarded as a "singularity-removed" discretization of the single-layer potential. 
However, as is well established in the theory of potential problems~\cite{folland1995introduction}, the single-layer potential should be employed for problems with Neumann boundary conditions, whereas the double-layer potential is more appropriate for problems with Dirichlet boundary conditions. 
In light of this awareness of the problem, the dipole-type method of fundamental solutions (also known as the dipole simulation method) was proposed. 
For further details, please refer to \S~\ref{sec:DSM}.
Henceforth, the dipole-type method of fundamental solutions will be referred to as the DSM.  
In the DSM, the approximate solution is constructed using the normal derivative of the fundamental solution as the basis function in lieu of the fundamental solution. 
In the original theoretical analysis of the MFS, it was necessary to make some mathematically unnatural assumptions. 
However, with the DSM, a similar theoretical analysis can be performed without the need to make unnatural assumptions. 
Therefore, it is more natural to use the DSM when dealing with Dirichlet boundary conditions. Nevertheless, even in the DSM, the problem of ill-conditionality persists, and to the best of the author's knowledge, there is no known solution to this problem.

The objective of this paper is to put forth a novel DSM, the DSM-QR, which is founded upon the tenets of the MFS-QR and eliminates the problematic ill-conditionality. 
Additionally, this paper endeavors to conduct a fundamental mathematical analysis of the DSM-QR. 
With the intention of establishing a robust foundation for future mathematical analysis, this paper concentrates on the Dirichlet boundary value problem
\begin{equation}
    \begin{dcases*}
        \triangle u=0&in $\mathbb{B}_\rho$,\\
        u=f&on $\partial\mathbb{B}_\rho$,
    \end{dcases*}
    \label{eq:BVP}
\end{equation}
for the Laplace equation in the disk domain, where $\mathbb{B}_\rho$ denotes the disk in the complex plane with radius $\rho$ having the origin as its center; that is,
\begin{equation}
    \mathbb{B}_\rho\coloneqq\{z\in\mathbb{C}\mid|z|<\rho\}.
\end{equation}
The collocation points $\{x_j\}_{j=1}^P$ and singular points $\{y_k\}_{k=1}^N$ are to be placed as
\begin{alignat}{2}
    x_j&=\rho\exp\left(\frac{2\pi\mathrm{i}j}{P}\right),&\qquad&j=1,2,\ldots,P,\\
    y_k&=R\exp\left(\frac{2\pi\mathrm{i}k}{N}\right),&\qquad&k=1,2,\ldots,N,
\end{alignat}
where $R>\rho$ and $\mathbb{R}^2$ and $\mathbb{C}$ are identified here and hereafter.
Moreover, set
\begin{equation}
    \kappa\coloneqq\frac{\rho}{R}.
\end{equation}

The following three theorems represent the core of this paper's theoretical framework.

\begin{theorem}
    \label{thm:unique-existence}
    The DSM-QR gives a unique approximate solution.
\end{theorem}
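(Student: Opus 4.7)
The plan is to show that the DSM-QR collocation matrix $\mathbf{G}\in\mathbb{R}^{P\times N}$, with entries $\mathbf{G}_{jk}=\psi_k(x_j)$, has full column rank $N$; then the least-squares system \eqref{eq:col_eq-linear} has a unique minimizer and the approximate solution is unique.

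First I would write $\mathbf{G}$ as an explicit matrix product. In analogy with the MFS-QR construction above, the DSM-QR basis takes the form $\bm{\Psi}=\mathbf{D}_N^{-1}\mathbf{R}\mathbf{D}\bm{F}(r,\theta)$ for appropriate $\mathbf{B},\mathbf{D},\bm{F}$ built from the Taylor expansion of $\partial_\nu E(\,\cdot\,-y_k)$. Hence
\[
    \mathbf{G}=\Phi_P\,\mathbf{D}\mathbf{R}^\top\mathbf{D}_N^{-1},
\]
where $\Phi_P\in\mathbb{R}^{P\times(2M+1)}$ is the evaluation matrix of the harmonic monomials in $\bm{F}$ at the collocation points. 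It thus suffices to prove that $\mathbf{D}\mathbf{R}^\top\mathbf{D}_N^{-1}$ has full column rank $N$ and that $\Phi_P$ is injective on its image.

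The diagonal matrices $\mathbf{D}$ and $\mathbf{D}_N$ are invertible because their entries $\log\varepsilon$ and $\varepsilon^n/n$ are nonzero, and $\mathbf{R}^\top$ has full column rank $N$ if and only if $\mathbf{B}=\mathbf{Q}\mathbf{R}$ has full row rank $N$. The latter follows from the equally spaced angles $\phi_k=2\pi k/N$ together with $2M+1>N$ through a partial-DFT argument on the sinusoidal entries of $\mathbf{B}$: any nontrivial left null vector would yield a real trigonometric polynomial of degree at most $M$ vanishing at $N$ points which, combined with complex-conjugation symmetry, forces all Fourier coefficients to vanish. Hence $\mathbf{D}\mathbf{R}^\top\mathbf{D}_N^{-1}$ has full column rank, and its image is an explicit $N$-dimensional subspace $V\subset\mathbb{R}^{2M+1}$ coinciding with the truncated DSM basis expressed in the harmonic-monomial frame.

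The central step is to prove that $\Phi_P$ restricted to $V$ is injective. Because both the singular and the collocation points are invariant under discrete rotations, I would simultaneously block-diagonalize $\mathbf{B}$ and $\Phi_P$ via the DFT; this decouples the question mode by mode and reduces injectivity to a scalar nondegeneracy exploiting $\kappa=\rho/R<1$. The main obstacle will be aligning the two DFT decompositions---one over the $N$ singular-point angles and one over the $P$ collocation angles---so that the evaluation map block-diagonalizes into invertible scalar blocks; once this alignment is achieved, nonvanishing of each block is a direct computation from the strictly decaying truncated Taylor coefficients.
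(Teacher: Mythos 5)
Your overall strategy---factor $\mathbf{G}=\Phi_P\,\mathbf{D}\mathbf{R}^\top\mathbf{D}_N^{-1}$ and reduce unique solvability to full column rank---is sound, and in substance it is the same route the paper takes: the paper computes $\mathbf{G}^\top\mathbf{G}$ explicitly (Lemma~\ref{lem:G}) using the discrete orthogonality of the equispaced angles (Lemma~\ref{lem:c_s} and the sum identities \eqref{eq:sum_sin}--\eqref{eq:sum_sincos}), finds that it is diagonal with entries $N^2$ and $\tfrac{N^2}{4}(1+\kappa^{N-2m})^2$, and concludes nonsingularity because these are positive. That diagonalization is exactly the ``mode-by-mode decoupling via the DFT'' you describe, and the ``scalar nondegeneracy'' you invoke is precisely the factor $1+\kappa^{N-2m}\neq0$. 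Two of your preliminary steps are fine (with one slip: the diagonal entries of $\mathbf{D}$ in the DSM-QR are $1,\kappa,\kappa,\ldots,\kappa^M,\kappa^M$, not $\log\varepsilon$ and $\varepsilon^n/n$, which belong to the MFS-QR of the introduction; invertibility holds either way), and full row rank of $\mathbf{B}$ is immediate here since, with $M=N-1$, its first $N$ columns $\bm{c}_0,\bm{s}_1,\bm{c}_1,\ldots,\bm{s}_{(N-1)/2},\bm{c}_{(N-1)/2}$ are mutually orthogonal and nonzero.

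The gap is that you stop exactly at the step that decides the theorem. Full column rank of $\mathbf{D}\mathbf{R}^\top\mathbf{D}_N^{-1}$ does not suffice, because $\Phi_P$ is $N\times(2N-1)$ and has an $(N-1)$-dimensional kernel; one must verify $V\cap\ker\Phi_P=\{0\}$, and this is where aliasing could in principle kill a mode. You name this as ``the main obstacle'' and then assert it resolves, but you never carry out the computation. In the case the paper treats ($P=N$, so $\theta_j=\phi_j$ and $\Phi_P=\mathbf{B}$), the alignment you worry about is automatic, and the decisive fact is the aliasing identity $\sin\bigl((N-m)\theta_j\bigr)=-\sin(m\theta_j)$, $\cos\bigl((N-m)\theta_j\bigr)=\cos(m\theta_j)$, which, applied to the explicit basis functions, gives $\psi_{2m}(1,\theta_j)=\sqrt{N/2}\,(1+\kappa^{N-2m})\sin m\theta_j$ and $\psi_{2m+1}(1,\theta_j)=\sqrt{N/2}\,(1+\kappa^{N-2m})\cos m\theta_j$: the two frequencies $m$ and $N-m$ present in each $\psi_k$ collapse onto a single grid mode with the \emph{same} sign, so the block is $1+\kappa^{N-2m}>0$ rather than a difference that could vanish. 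Without exhibiting the explicit $\mathbf{R}$ (hence the explicit $\psi_k$) and checking this sign, the claim that ``nonvanishing of each block is a direct computation'' is not yet a proof. Note also that the paper's route through $\mathbf{G}^\top\mathbf{G}$ buys more than existence: the same computation delivers the eigenvalues and hence Theorem~\ref{thm:condition-number}.
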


Denote by $\cond_2(\mathbf{A})$ the condition number of $\mathbf{A}$ induced by the $\ell^2$-norm.

\begin{theorem}
    \label{thm:condition-number}
    The condition number $\cond_2(\mathbf{G})$ of the coefficient matrix $\mathbf{G}$ is of $\mathrm{O}(1)$ as $N\to\infty$.
    More precisely, it holds that
    \begin{equation}
        \cond_2(\mathbf{G})=\frac{2}{1+\kappa^{N-2}},\quad
        N\in\mathbb{N}.
    \end{equation}
\end{theorem}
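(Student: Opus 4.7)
The plan is to reduce $\mathbf{G}$ to a form whose singular values can be read off directly, by exploiting the $\mathbb{Z}/N\mathbb{Z}$-symmetry of the equispaced configuration. Because the collocation points and source points sit on concentric circles at angles $\theta_j=\phi_j=2\pi j/N$ (taking $P=N$), every matrix entering the DSM-QR construction -- the angular matrix $\mathbf{B}$, the collocation evaluation matrix, and the diagonal rescalings $\mathbf{D}$ and $\mathbf{D}_N^{-1}$ -- is compatible with the cyclic shift by $2\pi/N$. This forces $\mathbf{G}$ to be circulant, hence unitarily diagonalizable by the DFT: $\mathbf{G}=\mathbf{F}^{*}\Lambda\mathbf{F}$ with $\Lambda$ diagonal, and the problem collapses to computing the entries of $\Lambda$ explicitly.

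To carry this out I would first write down the DSM analogue of the $\mathbf{B}\mathbf{D}\bm{F}$ decomposition. Differentiating the MFS Taylor expansion recalled in the introduction in the radial direction at each $y_k$ gives $\partial_{\nu_{y_k}}E(x-y_k)=-\tfrac{1}{2\pi}(\mathbf{B}\mathbf{D}^{\mathrm{DSM}}\bm{F})_k$, where $\mathbf{D}^{\mathrm{DSM}}=\diag(\varepsilon,\varepsilon^2,\varepsilon^2,\ldots,\varepsilon^{M+1},\varepsilon^{M+1})$ and $\mathbf{B}$, $\bm{F}$ are exactly as before. Plugging this into $\bm{\Psi}=\mathbf{D}_N^{-1}\mathbf{Q}^{\top}\bm{\Theta}$ and sampling at the collocation points yields, after absorbing the diagonal factors $\diag(1,\rho,\rho,\ldots,\rho^M,\rho^M)$ coming from $\bm{F}(\rho,\cdot)$, an expression of the form $\mathbf{G}=\varepsilon\,\mathbf{B}\,\mathbf{K}\,\mathbf{R}^{\top}(\mathbf{D}_N^{\mathrm{DSM}})^{-1}$ with $\mathbf{K}=\diag(1,\kappa,\kappa,\kappa^2,\kappa^2,\ldots,\kappa^M,\kappa^M)$. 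The second ingredient is an explicit QR factorization of $\mathbf{B}$: because $\phi_k=2\pi k/N$, its columns are rescaled real-DFT columns, and the aliasing identities $\cos((N-m)\phi_k)=\cos(m\phi_k)$ and $\sin((N-m)\phi_k)=-\sin(m\phi_k)$ pin down both $\mathbf{Q}$ and $\mathbf{R}$ in closed form.

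Inserting these closed forms reduces each diagonal entry $\Lambda_{kk}$ to a short sum of powers of $\kappa$. The DC mode will give the largest singular value (of order $1$), while the Nyquist-adjacent mode -- which aliases with its complement through the $\mathbf{R}$ step and is then rescaled by $(\mathbf{D}_N^{\mathrm{DSM}})^{-1}$ -- will give the smallest, picking up the additional factor $\kappa^{N-2}$. Taking the ratio then produces $\cond_2(\mathbf{G})=2/(1+\kappa^{N-2})$, matching the stated formula, and in particular showing that it remains bounded by $2$ uniformly in $N$.

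The hard part will be the bookkeeping in the QR step: one must identify exactly which columns of $\mathbf{B}$ beyond index $N$ alias with which earlier columns, track the signs and the accompanying powers of $\varepsilon$ contributed by $\mathbf{D}^{\mathrm{DSM}}$, handle the parities of $N$ separately (where the cases $m=N/2$ and $m=(N\pm 1)/2$ behave differently), and verify that the exponent $N-2$ is attained by precisely one mode and is not exceeded elsewhere. Everything else reduces to finite sums over $N$-th roots of unity, so once this aliasing accounting is correct the formula follows by direct computation.
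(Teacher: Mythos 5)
Your computational core---write $\mathbf{G}=\mathbf{B}\mathbf{D}\mathbf{R}^{\top}\mathbf{D}_N^{-1}$ (your $\mathbf{K}$ is the paper's $\mathbf{D}$), obtain the QR factorization of $\mathbf{B}$ in closed form from the aliasing identities $\cos(N-m)\phi_k=\cos m\phi_k$, $\sin(N-m)\phi_k=-\sin m\phi_k$ together with the orthogonality of $\{\bm{c}_j,\bm{s}_j\}$ over the $N$-th roots of unity, and then read off the extreme singular values mode by mode---is exactly the paper's route: it computes the $\psi_k$ explicitly, shows via the discrete trigonometric orthogonality relations that $\mathbf{G}^{\top}\mathbf{G}$ is diagonal with entries $N^2$ and $\tfrac{N^2}{4}(1+\kappa^{N-2m})^2$, and concludes from $\cond_2(\mathbf{G})=\sqrt{\lambda_{\max}/\lambda_{\min}}$. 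If you carry out the aliasing bookkeeping you describe, you will land on the same diagonal Gram matrix and the stated formula.

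However, the structural claim you lead with is false: $\mathbf{G}$ is \emph{not} circulant, so it does not admit a factorization $\mathbf{F}^{*}\Lambda\mathbf{F}$. Its first column is the constant vector $\sqrt{N}\,(1,\ldots,1)^{\top}$ while the remaining columns are not cyclic shifts of it (a circulant matrix with a constant column would be rank one). The cyclic symmetry lives in the \emph{original} DSM matrix $\mathbf{A}=(\partial E/\partial\nu_k(z_j-\zeta_k))_{j,k}$, which is circulant; the QR post-processing multiplies it on the right by $\mathbf{Q}\mathbf{D}_N^{-1}$, and since the columns of $\mathbf{Q}$ are precisely the real discrete Fourier vectors, the correct statement is that the columns of $\mathbf{G}$ are mutually orthogonal (each is a scalar multiple of a column of $\mathbf{Q}$ when sampled at the collocation points), i.e.\ $\mathbf{G}^{\top}\mathbf{G}$ is diagonal and $\mathbf{G}$ is essentially already in SVD form. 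That weaker statement is what you need, and it is what your own computation would deliver. A second slip: the minimal singular value is attained at the \emph{fundamental} mode $m=1$ (aliased with frequency $N-1$, whence the rescaling $\kappa^{-1}\cdot\kappa^{N-1}=\kappa^{N-2}$), not at the Nyquist-adjacent mode $m\approx N/2$, which yields the factor $1+\kappa$ and is in fact the \emph{largest} of the non-constant modes. Neither error changes the final formula once the diagonal entries are computed correctly, but both would have to be repaired for the argument to stand. Note also that the paper restricts to odd $N$ with $M=N-1$, so the even-$N$ parity cases you flag are outside its scope.
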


\begin{theorem}
    \label{thm:error_estimate}
    \begin{enumerate}
        \item Suppose that the Fourier series $\sum_{n\in\mathbb{Z}}f_n\mathrm{e}^{\mathrm{i}n\theta}$ of $\theta\mapsto f(\rho\mathrm{e}^{\mathrm{i}\theta})$ is absolutely convergent.
        Then, $u^{(N)}$ uniformly converges to $u$ on $\mathbb{B}_\rho$; that is,
        \begin{equation}
            \|u-u^{(N)}\|_{L^\infty(\mathbb{B})}\longrightarrow0\quad(N\to\infty).
        \end{equation}
        \item Suppose there exists an $\alpha>1$ such that $|f_n|=\mathrm{O}(|n|^{-\alpha})$ ($|n|\to\infty$) holds.
        Then, there exists a positive constant $C$, independent of $N$, such that
        \begin{equation}
            \|u-u^{(N)}\|_{L^\infty(\mathbb{B}_\rho)}\le CN^{-\alpha+1}
        \end{equation}
        holds.
        \item Suppose that there exists an $a\in(0,1)$ such that $|f_n|=\mathrm{O}(a^{|n|})$ holds as $|n|\to\infty$.
        Then, there exists a positive constant $C$, independent of $N$, such that
        \begin{equation}
            \|u-u^{(N)}\|_{L^\infty(\mathbb{B}_\rho)}\le C\times\begin{dcases*}
                a^{N/2}&if $a>\kappa^2$,\\
                N\kappa^N&if $a=\kappa^2$,\\
                \kappa^N&if $a<\kappa^2$.
            \end{dcases*}
        \end{equation}
    \end{enumerate}
\end{theorem}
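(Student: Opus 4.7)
The plan is to exploit the rotational symmetry of the disk problem: with the singular and collocation nodes equispaced on concentric circles, the collocation system is invariant under the cyclic group of $N$-fold rotations and is therefore diagonalised by the discrete Fourier transform, reducing the error analysis to a mode-by-mode computation. Since each basis function $\psi_k$ in the DSM--QR span is a harmonic polynomial, $u^{(N)}$ is harmonic in $\mathbb{B}_\rho$, and by the maximum principle
\begin{equation}
    \|u-u^{(N)}\|_{L^\infty(\mathbb{B}_\rho)}=\sup_\theta\bigl|u(\rho\mathrm{e}^{\mathrm{i}\theta})-u^{(N)}(\rho\mathrm{e}^{\mathrm{i}\theta})\bigr|\le\sum_{m\in\mathbb{Z}}\bigl|f_m-u^{(N)}_m\bigr|,
\end{equation}
where $u^{(N)}_m$ is the $m$-th Fourier coefficient of $\theta\mapsto u^{(N)}(\rho\mathrm{e}^{\mathrm{i}\theta})$.

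To obtain a closed form for $u^{(N)}_m$ I work with the underlying DSM collocation system, which spans the same space of harmonic functions as the DSM--QR. Using the Laurent expansion
\begin{equation}
    -\tfrac{1}{2\pi}\log|x-y|=-\tfrac{1}{2\pi}\log R+\tfrac{1}{2\pi}\sum_{n\ge1}\tfrac{1}{n}(r/R)^n\cos n(\theta-\phi)\qquad(r=|x|<R=|y|)
\end{equation}
and differentiating in $R$ gives the Fourier coefficients of $\partial_{\nu_y}E(x-y_k)$. The resulting collocation matrix is circulant, and summing the appropriate geometric series yields the eigenvalues, for $0<|n|\le N/2$,
\begin{equation}
    \lambda_n=-\frac{N}{4\pi R}\cdot\frac{\kappa^{|n|}+\kappa^{N-|n|}}{1-\kappa^N},\qquad\lambda_0=-\frac{N}{2\pi R(1-\kappa^N)}.
\end{equation}
Solving $\lambda_n\hat{Q}_n=\hat{f}_n$ with the aliased DFT $\hat{f}_n=\sum_{k\in\mathbb{Z}}f_{n+kN}$, substituting the coefficients back into $u^{(N)}$, and reading off Fourier coefficients on $\partial\mathbb{B}_\rho$ gives, for $|m|\le N/2$,
\begin{equation}
    f_m-u^{(N)}_m=\frac{\kappa^{N-2|m|}(1+\kappa^{2|m|})}{1+\kappa^{N-2|m|}}f_m-\frac{1-\kappa^N}{1+\kappa^{N-2|m|}}\sum_{k\ne0}f_{m+kN},
\end{equation}
while for an aliased index $m=n+jN$ with $|n|\le N/2$ and $j\ne0$ one obtains $f_m-u^{(N)}_m=f_m-\mathrm{O}(\kappa^{|j|N})\hat{f}_n$. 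Absorbing the $\mathrm{O}(\kappa^{|j|N})$ corrections yields the majorant
\begin{equation}
    \|u-u^{(N)}\|_{L^\infty(\mathbb{B}_\rho)}\le C\Bigl(\sum_{|n|\le N/2}\kappa^{N-2|n|}|f_n|+\sum_{|m|>N/2}|f_m|\Bigr).
\end{equation}

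Each of the three claims is now an exercise in bounding this majorant. Under (i) the second sum tends to $0$ by tail convergence of $\sum|f_n|$, and the first by dominated convergence since $\kappa^{N-2|n|}\le1$ with pointwise limit zero. Under (ii) the second sum is $\mathrm{O}(N^{1-\alpha})$ and dominates, the first being even $\mathrm{O}(N^{-\alpha})$ because $\kappa^{N-2|n|}$ is small outside a bounded neighbourhood of $|n|=N/2$. Under (iii) I rewrite the first sum as $\kappa^N\sum_{|n|\le N/2}(a/\kappa^2)^{|n|}$ and split by the three regimes $a>\kappa^2$, $a=\kappa^2$, $a<\kappa^2$, obtaining respectively $a^{N/2}$, $N\kappa^N$ and $\kappa^N$, while the tail $\sum_{|m|>N/2}a^{|m|}=\mathrm{O}(a^{N/2})$ is dominant only in the first regime. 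The main technical obstacle I anticipate is the bookkeeping in the second step: isolating the exceptional behaviour of the $n=0$ mode, justifying the interchange of sums that defines the Fourier coefficients of $u^{(N)}$ on $\partial\mathbb{B}_\rho$, and verifying absolute convergence of the resulting Fourier series so that the termwise triangle inequality used above is rigorous.
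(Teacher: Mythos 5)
Your endgame---the majorant
\begin{equation}
    \|u-u^{(N)}\|_{L^\infty(\mathbb{B}_\rho)}\le C\Bigl(\sum_{|n|\le N/2}\kappa^{N-2|n|}|f_n|+\sum_{|m|>N/2}|f_m|\Bigr)
\end{equation}
and the three-case summation---is essentially what the paper itself works with (its Lemma~\ref{lem:g} gives $g_n^{(N)}\le2\kappa^{N-2n}$ for $n\le(N-1)/2$ and $g_n^{(N)}\le2$ otherwise), and your handling of cases (i)--(iii) from that point on is sound. The gap is in how you arrive at the majorant. You replace the DSM-QR collocation system by ``the underlying DSM collocation system, which spans the same space of harmonic functions as the DSM-QR.'' That identification is false. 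The DSM-QR basis $\bm{\Psi}=\mathbf{D}_N^{-1}\mathbf{R}\mathbf{D}\bm{F}$ is built from the Fourier expansion of the dipole kernel \emph{truncated} at order $M=N-1$; each $\psi_k$ is a harmonic polynomial of degree at most $N-1$, so $u^{(N)}$ has no Fourier content beyond $|m|\le N-1$. The original DSM basis functions $\partial E/\partial\nu_k(\cdot-\zeta_k)$ carry the full infinite expansion, and your circulant eigenvalues $\lambda_n\propto(\kappa^{|n|}+\kappa^{N-|n|})/(1-\kappa^N)$ are obtained precisely by summing the geometric series over all aliases $n,\,n\pm N,\,n\pm2N,\dots$, i.e.\ they belong to the untruncated method. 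The two trial spaces are different $N$-dimensional spaces, the two collocation solutions are different functions (they agree only at the collocation nodes when $P=N$), and your closed form for $f_m-u^{(N)}_m$, including the aliasing term $\sum_{k\ne0}f_{m+kN}$, is a statement about the DSM, not about the $u^{(N)}$ of the theorem. There is also an internal inconsistency: you first invoke that $u^{(N)}$ is a harmonic polynomial, then compute with a trial space whose elements are not polynomials.

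To close the gap you would either have to (a) redo the mode computation for the truncated basis---which is what the paper does: Lemma~\ref{lem:G} shows $\mathbf{G}^\top\mathbf{G}$ is diagonal, the coefficients $Q_k$ are then explicit, and Lemma~\ref{lem:phi} yields the exact transfer function $\varphi_n^{(N)}(1,\theta)=\frac{1}{1+\kappa^{N-2m}}\bigl(\mathrm{e}^{\mathrm{i}m\theta}+\kappa^{N-2m}\mathrm{e}^{-\mathrm{i}(N-m)\theta}\bigr)$ for $n\equiv m$, involving only the two modes $m$ and $-(N-m)$ and no infinite aliasing sum---or (b) prove separately that the DSM and DSM-QR collocation solutions differ by an amount absorbable into the stated bounds, which requires a stability estimate you have not supplied. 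Option (a) is the shorter path and lands you exactly on the majorant you already know how to finish.
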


The following is a description of the organization of this paper. 
In Section~\ref{sec:DSM}, we undertake a review of the MFS from the perspective of potential theory, and subsequently derive a tabular expression for the approximate solution by the DSM. 
In Section~\ref{sec:DSM-QR}, we put forth the DSM-QR, which is based on the concept of the MFS-QR. 
Section~\ref{sec:unique-existence} presents a mathematical analysis of DSM-QR. 
First, we provide a detailed account of the basis functions utilized in the DSM-QR through the presentation of concrete calculations pertaining to QR decomposition. 
Subsequently, the eigenvalues of the coefficient matrix are obtained in Lemma~\ref{lem:G}, which also demonstrates that the condition number of the coefficient matrix is of $\mathrm{O}(1)$. 
Then, we provide a proof of the error estimate presented in Theorem~\ref{thm:error_estimate}. 
In Section~\ref{sec:numerics}, we present the results of numerical experiments. 
First, we consider the case of the disk region and demonstrate that the problematic ill-conditionality is entirely eliminated and that the error decays exponentially, as predicted by the main theorem of this paper. 
Next, we extend the DSM-QR to the case of Jordan regions, although no mathematical analysis is available. 
Specifically, DSM-QR is extended to the general Jordan region by mapping point configurations in the disk region to the Jordan region by a conformal mapping, and by viewing basis functions as perturbations of basis functions in the disk region. 
The utility of this approach is demonstrated through numerical experiments. 
Section~\ref{sec:summary} provides a summary of the contents of this paper and suggests avenues for future research.

\section{Dipole simulation method}
\label{sec:DSM}

In this section, we briefly explain the idea of the dipole simulation method (DSM).

Let $\Gamma$ be a Jordan curve surrounding $\Omega$; more precisely, $\overline{\Omega}\subset\Int\Omega[\Gamma]$, where $\Omega[\Gamma]$ denotes a Jordan region with boundary $\Gamma$ and $\Int\Omega[\Gamma]$ denotes the interior of $\Omega[\Gamma]$.
Suppose that there exists a function $Q$ defined on $\Gamma$ such that the solution $u$ to the Dirichlet boundary value problem~\eqref{eq:BVP} can be expressed as
\begin{equation}
    u(z)=\int_\Gamma E(z-\zeta)Q(\zeta)\,\mathrm{d}\sigma(\zeta)\eqqcolon\mathrm{SL}[Q](z),
\end{equation}
which can regarded as a ``singularity-removed'' single-layer potential.
Here, $E=E(z)=-(2\pi)^{-1}\log|z|$ is the fundamental solution to the Laplace operator $-\triangle$.
If we consider a linear combination of Dirac delta distributions $Q^{(N)}(z)\coloneqq\sum_{k=1}^NQ_k\delta(\zeta-\zeta_k)$ with $\zeta_k\in\Gamma$ ($k=1,2,\ldots,N$), we formally obtain that
\begin{equation}
    \mathrm{SL}[Q^{(N)}](z)
    =\sum_{k=1}^NQ_kE(z-\zeta_k),
\end{equation}
which is an approximate solution by the MFS.
Namely, the MFS can be regarded as a discretization of the ``singularity-removed'' single-layer potential representation.

However, it is natural to consider the double-layer potential, not single-layer potential, when we deal with the Dirichlet boundary condition (see, for instance, \cite{folland1995introduction}).
Recall that the dipole-layer potential is given by replacing the integral kernel $E(z-\zeta)$ with its normal derivative as
\begin{equation}
    \mathrm{DL}[Q](z)\coloneqq\int_\Gamma\frac{\partial E}{\partial\nu(\zeta)}(z-\zeta)\,\mathrm{d}\zeta,
\end{equation}
where $\nu(\zeta)$ denotes the unit outward normal vector to $\Gamma$.
Then, we formally obtain
\begin{equation}
    \mathrm{DL}[Q^{(N)}](z)
    =\sum_{k=1}^NQ_k\frac{\partial E}{\partial\nu(\zeta_k)}(z-\zeta_k).
    \label{eq:DLP}
\end{equation}
Therefore, it is natural to consider that a function of the form~\eqref{eq:DLP} can also approximate the solution to the Dirichlet boundary value problem~\eqref{eq:BVP}.
This is the idea of the DSM.
More precisely, the algorithm of the DSM can be described as follows:
\begin{enumerate}
    \item Choose $N\in\mathbb{N}$, and arrange the singular points $\{\zeta_k\}_{k=1}^N\subset\mathbb{C}\setminus\overline{\Omega}$ and the dipole moments $\{\nu_k\}_{k=1}^N\subset S^1$ suitably, where $S^1\coloneqq\{z\in\mathbb{C}\mid|z|=1\}$ denotes the unit circle.
    \item Seek an approximate solution $u^{(N)}$ of the form
    \begin{equation}
        u^{(N)}(z)=\sum_{k=1}^NQ_k\frac{\partial E}{\partial\nu_k}(z-\zeta_k).
    \end{equation}
    \item Determine the coefficients $\{Q_k\}_{k=1}^N$ by the collocation method; that is, arrange the collocation points $\{z_j\}_{j=1}^P\subset\partial\Omega$ suitably, and impose the collocation equations:
    \begin{equation}
        u^{(N)}(z_j)=f(z_j),\quad
        j=1,2,\ldots,P.
    \end{equation}
\end{enumerate}

\section{DSM-QR}
\label{sec:DSM-QR}

In this section, we develop DSM-QR algorithm.
In particular, we replace the basis functions $\{(\partial E/\partial\nu_k)(\cdot-\zeta_k)\}_{k=1}^N$ with new ones by a similar idea to the MFS-QR.

Let us arrange the singular points $\zeta_k$ on the circle with radius $R$ uniformly as
\begin{equation}
    \zeta_k\coloneqq R\omega^k\quad(k=1,2,\ldots,N),\quad
    \omega\coloneqq\exp\frac{2\pi\mathrm{i}}{N}.
\end{equation}
Regarding the circle $|z|=R$ as $\Gamma$, it is natural to define dipole moments $\nu_k$ as
\begin{equation}
    \nu_k\coloneqq\frac{\zeta_k}{|\zeta_k|}=\omega^k\quad(k=1,2,\ldots,N).
\end{equation}
Then, the basis function of the DSM can be computed as
\begin{equation}
    \frac{\partial E}{\partial\nu_k}(z-\zeta_k)
    =-\frac{1}{2\pi}\Re\left(\frac{\nu_k}{z-\zeta_k}\right)
    =-\frac{1}{2\pi}\Re\left(\frac{\omega^k}{z-R\omega^k}\right).
    \label{eq:DSM-basis}
\end{equation}

For $z=r\mathrm{e}^{\mathrm{i}\theta}$ and $\zeta=R\mathrm{e}^{\mathrm{i}\phi}$ with $0\le r\le\rho<R$, we formally have
\begin{align}
    &-\frac{1}{2\pi}\Re\left(\frac{\zeta/|\zeta|}{z-\zeta}\right)\\
    &=-\frac{1}{2\pi}\Re\left(\frac{\mathrm{e}^{\mathrm{i}\phi}}{r\mathrm{e}^{\mathrm{i}\theta}-R\mathrm{e}^{\mathrm{i}\phi}}\right)\\
    &=\frac{1}{2\pi R}\Re\left[\sum_{n=0}^\infty\left(\frac{r}{R}\right)^n\mathrm{e}^{\mathrm{i}n(\theta-\phi)}\right]\\
    &=\frac{1}{2\pi R}\left[1+\sum_{n=1}^\infty\left(\frac{r}{R}\right)^n\cos n(\theta-\phi)\right]\\
    &=\frac{1}{2\pi R}\left[1+\sum_{n=1}^\infty\left(\frac{r}{R}\right)^n\left(\cos n\theta\cos n\phi+\sin n\theta\sin n\phi\right)\right]\\
    &=\frac{1}{2\pi R}\begin{pmatrix}
        1&\sin\phi&\cos\phi&\sin2\phi&\cos2\phi&\cdots
        \end{pmatrix}
        \begin{pmatrix}
            1\\
            &\kappa\\
            &&\kappa\\
            &&&\kappa^2\\
            &&&&\kappa^2\\
            &&&&&\ddots
        \end{pmatrix}\begin{pmatrix}
            1\\
            s\sin\theta\\
            s\cos\theta\\
            s^2\sin2\theta\\
            s^2\cos2\theta\\
            \vdots
        \end{pmatrix},
\end{align}
where $\kappa\coloneqq\rho/R\in(0,1)$ and $s\coloneqq r/\rho\in[0,1]$.
Basis functions~\eqref{eq:DSM-basis} of the DSM can be obtained by taking $\rho=\phi_k\coloneqq2\pi k/N$ ($k=1,2,\ldots,N$).
Namely, 
\begin{multline}
    \begin{pmatrix}
        \dfrac{\partial E}{\partial\nu_1}(z-\zeta_1)\\[2ex]
        \dfrac{\partial E}{\partial\nu_2}(z-\zeta_2)\\[2ex]
        \vdots\\
        \dfrac{\partial E}{\partial\nu_N}(z-\zeta_N)
    \end{pmatrix}
    =
    \frac{1}{2\pi R}\begin{pmatrix}
        1&\sin\phi_1&\cos\phi_1&\sin2\phi_1&\cos2\phi_1&\cdots\\
        1&\sin\phi_2&\cos\phi_2&\sin2\phi_2&\cos2\phi_2&\cdots\\
        \vdots&\vdots&\vdots&\vdots&\vdots\\
        1&\sin\phi_N&\cos\phi_N&\sin2\phi_N&\cos2\phi_N&\cdots
    \end{pmatrix}\\[-3ex]
    \begin{pmatrix}
        1\\
        &\kappa\\
        &&\kappa\\
        &&&\kappa^2\\
        &&&&\kappa^2\\
        &&&&&\ddots
    \end{pmatrix}\begin{pmatrix}
        1\\
        s\sin\theta\\
        s\cos\theta\\
        s^2\sin2\theta\\
        s^2\cos2\theta\\
        \vdots
    \end{pmatrix}
\end{multline}
Hereafter, we omit the constant factor $1/(2\pi R)$.

Let $M\in\mathbb{N}$ satisfying $2M+1>N$, and truncate the above infinite series expansion at the $M$-th order.
More precisely, set
\begin{align}
    \mathbf{B}&\coloneqq\begin{pmatrix}
        1&\sin\phi_1&\cos\phi_1&\cdots&\sin M\phi_1&\cos M\phi_1\\
        1&\sin\phi_2&\cos\phi_2&\cdots&\sin M\phi_2&\cos M\phi_2\\
        \vdots&\vdots&\vdots&&\vdots&\vdots\\
        1&\sin\phi_N&\cos\phi_N&\cdots&\sin M\phi_N&\cos M\phi_N
    \end{pmatrix}\in\mathbb{R}^{N\times(2M+1)},\\
    \mathbf{D}&\coloneqq\diag\left(1,\kappa,\kappa,\ldots,\kappa^M,\kappa^M\right)\in\mathbb{R}^{(2M+1)\times(2M+1)},\\
    \bm{F}(s,\theta)&\coloneqq\begin{pmatrix}
        1\\
        s\sin\theta\\
        s\cos\theta\\
        \vdots\\
        s^M\sin M\theta\\
        s^M\cos M\theta
    \end{pmatrix}\in\mathbb{R}^{2M+1}.
\end{align}
We perform the QR decomposition of $\mathbf{B}$ as $\mathbf{B}=\mathbf{Q}\mathbf{R}$, where $\mathbf{Q}\in\mathbb{R}^{N\times N}$ is an orthogonal matrix and $\mathbf{R}\in\mathbb{R}^{N\times(2M+1)}$ is an upper triangular matrix.
Furthermore, let $\mathbf{D}_N$ be the $N$-th leading principal submatrix of $\mathbf{D}$.
Then, we define new basis functions as
\begin{equation}
    \bm{\Psi}(s,\theta)
    =(\psi_k(s,\theta))_{k=1}^N
    \coloneqq\mathbf{D}_N^{-1}\mathbf{Q}^\top\mathbf{Q}\mathbf{R}\mathbf{D}\bm{F}(s,\theta)
    =\mathbf{D}_N^{-1}\mathbf{R}\mathbf{D}\bm{F}(s,\theta).
    \label{eq:DSM-QR_basis}
\end{equation}

Hence, the DSM-QR gives an approximate solution to problem~\eqref{eq:BVP} of the form
\begin{equation}
    u^{(N)}(z)
    =u^{(N)}(r,\theta)
    =\sum_{k=1}^NQ_k\psi_k(r/\rho,\theta),\quad
    z=r\mathrm{e}^{\mathrm{i}\theta}.
\end{equation}
Coefficients $\{Q_k\}_{k=1}^N$ are determined by the collocation method; that is,
\begin{equation}
    u^{(N)}(\rho\mathrm{e}^{\mathrm{i}\theta_j})=f(\rho\mathrm{e}^{\mathrm{i}\theta_j}),\quad
    \theta_j\coloneqq\frac{2\pi j}{P},\quad
    j=1,2,\ldots,P,
\end{equation}
which is equivalent to the linear system
\begin{equation}
    \mathbf{G}\bm{Q}=\bm{f},
    \label{eq:DSM-QR_collocaiton-equation-matrix}
\end{equation}
where
\begin{equation}
    \mathbf{G}\coloneqq(\psi_k(1,\theta_j))_{\substack{j=1,2,\ldots,P\\k=1,2,\ldots,N}}\in\mathbb{R}^{P\times N},
    \quad\bm{Q}\coloneqq(Q_k)_{k=1}^N\in\mathbb{R}^N,\quad
    \bm{f}\coloneqq(f(\rho\mathrm{e}^{\mathrm{i}\theta_j}))_{j=1}^P\in\mathbb{R}^P.
\end{equation}

\section{Mathematical analysis of DSM-QR}
\label{sec:unique-existence}

\subsection{Concrete expression of basis functions of DSM-QR}

Hereafter, we choose $M=N-1$ and suppose that $N$ is odd and greater than $1$ for the sake of simplicity.
We explicitly compute the QR decomposition of $\mathbf{B}$.
To this end, define vectors $\bm{c}_j,\bm{s}_j\in\mathbb{R}^N$ by
\begin{equation}
    \bm{c}_j
    \coloneqq
    \begin{pmatrix}
        \cos j\phi_1\\
        \cos j\phi_2\\
        \vdots\\
        \cos j\phi_N
    \end{pmatrix},
    \quad
    \bm{s}_j
    \coloneqq
    \begin{pmatrix}
        \sin j\phi_1\\
        \sin j\phi_2\\
        \vdots\\
        \sin j\phi_N
    \end{pmatrix}
\end{equation}
for $j\in\mathbb{Z}$.
Then, the matrix $\mathbf{B}$ can be represented as
\begin{equation}
    \mathbf{B}
    =\begin{pmatrix}
        \bm{c}_0
        &\bm{s}_1
        &\bm{c}_1
        &\cdots
        &\bm{s}_N
        &\bm{c}_N
    \end{pmatrix}.
\end{equation}

\begin{lemma}
    \label{lem:c_s}
    It holds that
    \begin{alignat}{2}
        &\langle\bm{c}_j,\bm{c}_{j'}\rangle_{\ell^2}=\langle\bm{s}_j,\bm{s}_{j'}\rangle_{\ell^2}=0&\quad&\text{for}\ 0\le j,j'\le\frac{N-1}{2}\ \text{with}\ j\neq j',\\
        &\langle\bm{c}_j,\bm{s}_{j'}\rangle_{\ell^2}=0&\quad&\text{for}\ 0\le j,j'\le\frac{N-1}{2},\\
        &\|\bm{c}_0\|_{\ell^2}=\sqrt{N},\\
        &\|\bm{c}_j\|_{\ell^2}=\|\bm{s}_j\|_{\ell^2}=\sqrt{\frac{N}{2}}&\quad&\text{for}\ 1\le j\le\frac{N-1}{2}.
    \end{alignat}
    Furthermore, it holds that
    \begin{equation}
        \bm{c}_{N-j}=\bm{c}_j,\quad
        \bm{s}_{N-j}=-\bm{s}_j,\quad
        1\le j\le\frac{N-1}{2}.
    \end{equation}
\end{lemma}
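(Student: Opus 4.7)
The plan is to reduce everything to geometric-sum identities for $N$-th roots of unity, exploiting that $\phi_k = 2\pi k/N$ gives $\mathrm{e}^{\mathrm{i}j\phi_k} = \omega^{jk}$ with $\omega = \exp(2\pi\mathrm{i}/N)$. Setting $\omega_m := \sum_{k=1}^{N}\omega^{mk}$, the standard identity gives $\omega_m = N$ when $N \mid m$ and $\omega_m = 0$ otherwise. Under the constraint $0 \le j, j' \le (N-1)/2$, one has $|j \pm j'| \le N-1$, so the only way for $j \pm j' \equiv 0 \pmod N$ is $j \pm j' = 0$ exactly; this is the key arithmetic observation that makes the orthogonality statements fall out cleanly.

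First I would write $\cos(j\phi_k) = (\omega^{jk} + \omega^{-jk})/2$ and $\sin(j\phi_k) = (\omega^{jk} - \omega^{-jk})/(2\mathrm{i})$, then expand each inner product $\langle \bm{c}_j, \bm{c}_{j'}\rangle_{\ell^2}$, $\langle \bm{s}_j, \bm{s}_{j'}\rangle_{\ell^2}$, $\langle \bm{c}_j, \bm{s}_{j'}\rangle_{\ell^2}$ into four sums of the form $\omega_{\pm j \pm j'}$. For $j \neq j'$ with both in $\{0,1,\ldots,(N-1)/2\}$, each of the four exponents is a nonzero integer of absolute value at most $N-1$, so every such sum vanishes. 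This yields the three orthogonality statements in one stroke.

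For the norms, I would specialize to $j = j'$. Then $\langle \bm{c}_0, \bm{c}_0\rangle = N$ immediately. For $1 \le j \le (N-1)/2$, the expansion of $\|\bm{c}_j\|_{\ell^2}^2$ has two surviving terms $\omega_0/4 + \omega_0/4 = N/2$ (the $\omega_{\pm 2j}$ terms vanish by the same range argument, since $1 \le 2j \le N-1$), and likewise for $\|\bm{s}_j\|_{\ell^2}^2$; note the sign difference in the sine expansion cancels correctly because the surviving cross terms $\omega^{jk}\omega^{-jk} = 1$ contribute with opposite signs in the two formulas but the prefactors $(2\mathrm{i})^{-2} = -1/4$ restore the $+N/2$.

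Finally, the symmetries $\bm{c}_{N-j} = \bm{c}_j$ and $\bm{s}_{N-j} = -\bm{s}_j$ follow entrywise from $\cos((N-j)\phi_k) = \cos(2\pi k - j\phi_k) = \cos(j\phi_k)$ and the analogous identity for sine, using $2\pi k$-periodicity and the parity of the trig functions; this is essentially immediate. I do not foresee a genuine obstacle here; the only point requiring care is the bookkeeping for the range of $j \pm j'$ so that the DFT orthogonality is invoked legitimately. Since the bound $(N-1)/2$ and the odd parity of $N$ are already built into the setup, this is a matter of being explicit rather than overcoming a real difficulty.
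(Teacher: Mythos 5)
Your proposal is correct, and it is exactly the route the paper intends: the paper omits the proof, remarking only that the lemma "immediately follows from the elementary relation" $\sum_{l=1}^N\omega^{jl}=N$ if $j\equiv0$ and $0$ otherwise, which is precisely the geometric-sum identity you invoke after writing the cosines and sines in terms of $\omega^{\pm jk}$. Your bookkeeping of the exponent ranges $|j\pm j'|\le N-1$ and the parity/periodicity argument for $\bm{c}_{N-j}$, $\bm{s}_{N-j}$ fills in the omitted details correctly.
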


Since the above lemma immediately follows from the elementary relation
\begin{equation}
    \sum_{l=1}^N\omega^{jl}=\begin{dcases*}
        0&if $j\not\equiv0$,\\
        N&otherwise,
    \end{dcases*}
    \label{eq:omega_sum}
\end{equation}
we omit its proof.
Here and hereafter, the congruence $\equiv$ is taken in modulo $N$.

Define
\begin{align}
    \bm{q}_j
    &\coloneqq\begin{dcases*}
        \frac{\bm{c}_l}{|\bm{c}_l|}&if $j=2l$, $l=0,1,\ldots,\dfrac{N-1}{2}$,\\
        \frac{\bm{s}_l}{|\bm{s}_l|}&if $j=2l-1$, $l=1,2,\ldots,\dfrac{N-1}{2}$
    \end{dcases*}\\
    &=\begin{dcases*}
        \frac{\bm{c}_0}{\sqrt{N}}&if $j=0$,\\
        \frac{\bm{s}_l}{\sqrt{N/2}}&if $j=2l-1$, $l=1,2,\ldots,\dfrac{N-1}{2}$,\\
        \frac{\bm{c}_l}{\sqrt{N/2}}&if $j=2l$, $l=1,2,\ldots,\dfrac{N-1}{2}$.
    \end{dcases*}
\end{align}
Then, Lemma~\ref{lem:c_s} yields that
\begin{align}
    \mathbf{B}
    &=\begin{pmatrix}
        \bm{c}_0&\bm{s}_1&\bm{c}_1&\cdots&\bm{s}_{\frac{N-1}{2}}&\bm{c}_{\frac{N-1}{2}}&\bm{s}_{\frac{N+1}{2}}&\bm{c}_{\frac{N+1}{2}}&\cdots&\bm{s}_{N-1}&\bm{c}_{N-1}
    \end{pmatrix}\\
    &=\begin{pmatrix}
        \bm{c}_0&\bm{s}_1&\bm{c}_1&\cdots&\bm{s}_{\frac{N-1}{2}}&\bm{c}_{\frac{N-1}{2}}&-\bm{s}_{\frac{N-1}{2}}&\bm{c}_{\frac{N-1}{2}}&\cdots&-\bm{s}_1&\bm{c}_1
    \end{pmatrix}\\
    &=\begin{pmatrix}
        \bm{c}_0&\bm{s}_1&\bm{c}_1&\cdots&\bm{s}_{\frac{N-1}{2}}&\bm{c}_{\frac{N-1}{2}}
    \end{pmatrix}\begin{pmatrix}
        1\\
        &1&&&&&&&&-1\\
        &&1&&&&&&&&1\\
        &&&\ddots&&&&&\reflectbox{$\ddots$}\\
        &&&&1&&-1\\
        &&&&&1&&1
    \end{pmatrix}\\
    &=\begin{pmatrix}
        \bm{q}_0&\bm{q}_1&\bm{q}_2&\cdots&\bm{q}_{N-2}&\bm{q}_{N-1}
    \end{pmatrix}\sqrt{\frac{N}{2}}\begin{pmatrix}
        \sqrt{2}\\
        &1&&&&&&&&-1\\
        &&1&&&&&&&&1\\
        &&&\ddots&&&&&\reflectbox{$\ddots$}\\
        &&&&1&&-1\\
        &&&&&1&&1
    \end{pmatrix}.
\end{align}
Therefore, setting
\begin{align}
    \bm{Q}&\coloneqq\begin{pmatrix}
        \bm{q}_0&\bm{q}_1&\bm{q}_2&\cdots&\bm{q}_{N-2}&\bm{q}_{N-1}
    \end{pmatrix},\\
    \mathbf{R}&\coloneqq\sqrt{N}\begin{pmatrix}
        \sqrt{2}\\
        &1&&&&&&&&-1\\
        &&1&&&&&&&&1\\
        &&&\ddots&&&&&\reflectbox{$\ddots$}\\
        &&&&1&&-1\\
        &&&&&1&&1
    \end{pmatrix},
\end{align}
we have $\mathbf{B}=\mathbf{Q}\mathbf{R}$.
Since $\mathbf{Q}$ is an orthogonal matrix by Lemma~\ref{lem:c_s} and $\mathbf{R}$ is an upper triangular matrix, this is the desired QR decomposition of $\mathbf{B}$.
A direct computation shows that
\begin{align}
    &\mathbf{D}_N^{-1}\mathbf{R}\mathbf{D}=(\tilde{r}_{ij})_{\substack{i=1,2,\ldots,N\\j=1,2,\ldots,2N-1}},\\
    &\tilde{r}_{ij}=\begin{dcases*}
        \sqrt{N}&if $i=j=1$,\\
        \sqrt{\frac{N}{2}}&if $i=j=2,3,\ldots,N$,\\
        -\sqrt{\frac{N}{2}}\kappa^{-\lfloor i/2\rfloor+\lfloor j/2\rfloor}&if $i=2l\ \left(l=1,2,\ldots,\dfrac{N-1}{2}\right)$, $j=2N-i$,\\
        \sqrt{\frac{N}{2}}\kappa^{-\lfloor i/2\rfloor+\lfloor j/2\rfloor}&if $i=2l+1\ \left(l=1,2,\ldots,\dfrac{N-1}{2}\right)$, $j=2(N+1)-i$.
    \end{dcases*}
\end{align}
Therefore, the basis functions $\bm{\Psi}=(\psi_k)_{k=1}^N$ of the DSM-QR, defined as \eqref{eq:DSM-QR_basis}, can be explicitly given by
\begin{equation}
    \psi_k(s,\theta)=\begin{dcases*}
        \sqrt{N}&if $k=1$,\\
        \sqrt{\frac{N}{2}}\left[s^m\sin m\theta-\kappa^{N-2m}s^{N-m}\sin(N-m)\theta\right]&if $k=2m$, $m=1,2,\ldots,\dfrac{N-1}{2}$,\\
        \sqrt{\frac{N}{2}}\left[s^m\cos m\theta+\kappa^{N-2m}s^{N-m}\cos(N-m)\theta\right]&if $k=2m+1$, $m=1,2,\ldots,\dfrac{N-1}{2}$.
    \end{dcases*}
\end{equation}

\subsection{Unique existence and condition number}

In this subsection, we prove that the linear system~\eqref{eq:DSM-QR_collocaiton-equation-matrix} is uniquely solvable.
We will establish mathematical theory for $P=\alpha N$ with $\alpha\in\mathbb{N}$.
When $\alpha=1$, the linear system~\eqref{eq:DSM-QR_collocaiton-equation-matrix} is understood in a standard manner.
When $\alpha\ge2$, we seek a solution to the linear sytem~\eqref{eq:DSM-QR_collocaiton-equation-matrix} in the least-squares sense; that is,
\begin{equation}
    \bm{Q}\in\argmin_{\tilde{\bm{Q}}\in\mathbb{R}^N}\|\mathbf{G}\bm{Q}-\bm{f}\|_{\ell^2}^2,
\end{equation}
which is equivalent to
\begin{equation}
    \mathbf{G}^\top\mathbf{G}\bm{Q}=\mathbf{G}^\top\bm{f}.
    \label{eq:collocation_linear-system_least-squares}
\end{equation}
Hereafter, we focus on the case where $\alpha=1$.
Other cases $\alpha\ge2$ can be shown in a similar manner.

We examine properties of the coefficient matrix $\mathbf{G}^\top\mathbf{G}$.

\begin{lemma}
    \label{lem:G}
    The matrix $\mathbf{G}^\top\mathbf{G}\in\mathbb{R}^{N\times N}$ is diagonal and its diagonal elements are given by
    \begin{equation}
        \left[\mathbf{G}^\top\mathbf{G}\right]_{kk}=\begin{dcases*}
            N^2&if $k=1$,\\
            \frac{N^2}{4}(1+\kappa^{N-2m})^2&if $k=2m$ or $k=2m+1$ with $m=1,2,\ldots,\dfrac{N-1}{2}$.
        \end{dcases*}
    \end{equation}
\end{lemma}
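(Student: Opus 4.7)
The plan is to observe that each column of $\mathbf{G}$ is, after using the symmetries from Lemma~\ref{lem:c_s}, a scalar multiple of one of the pairwise orthogonal vectors $\bm{c}_0,\bm{s}_1,\bm{c}_1,\ldots,\bm{s}_{(N-1)/2},\bm{c}_{(N-1)/2}$. Since in the case $P=N$ we have $\theta_j=2\pi j/N=\phi_j$, plugging the explicit formulas for $\psi_k$ at $s=1$ and $\theta=\theta_j$ into the definition of $\mathbf{G}$ shows that the $k$-th column of $\mathbf{G}$ equals $\sqrt{N}\,\bm{c}_0$ when $k=1$, equals $\sqrt{N/2}\,(\bm{s}_m-\kappa^{N-2m}\bm{s}_{N-m})$ when $k=2m$, and equals $\sqrt{N/2}\,(\bm{c}_m+\kappa^{N-2m}\bm{c}_{N-m})$ when $k=2m+1$, for $m=1,2,\ldots,(N-1)/2$.

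Next, I would invoke the identities $\bm{c}_{N-m}=\bm{c}_m$ and $\bm{s}_{N-m}=-\bm{s}_m$ from Lemma~\ref{lem:c_s} to collapse these combinations: the column for $k=2m$ becomes $\sqrt{N/2}\,(1+\kappa^{N-2m})\,\bm{s}_m$ and the column for $k=2m+1$ becomes $\sqrt{N/2}\,(1+\kappa^{N-2m})\,\bm{c}_m$. Since Lemma~\ref{lem:c_s} asserts that the family $\bm{c}_0,\bm{s}_1,\bm{c}_1,\ldots,\bm{s}_{(N-1)/2},\bm{c}_{(N-1)/2}$ is orthogonal, distinct columns of $\mathbf{G}$ are orthogonal; hence $\mathbf{G}^\top\mathbf{G}$ is diagonal.

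Finally, the diagonal entries follow directly from the norm formulas in Lemma~\ref{lem:c_s}: from $\|\bm{c}_0\|_{\ell^2}^2=N$ one gets $[\mathbf{G}^\top\mathbf{G}]_{11}=N\cdot N=N^2$, while $\|\bm{c}_m\|_{\ell^2}^2=\|\bm{s}_m\|_{\ell^2}^2=N/2$ yields $[\mathbf{G}^\top\mathbf{G}]_{kk}=(N/2)(1+\kappa^{N-2m})^2\cdot(N/2)=(N^2/4)(1+\kappa^{N-2m})^2$ whenever $k=2m$ or $k=2m+1$. There is no real obstacle; the only care required is the bookkeeping that couples the ``$m$-th block'' with the ``$(N-m)$-th block'' inherited from the structure of $\mathbf{R}$ in the previous subsection. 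The whole argument is a direct substitution followed by an appeal to Lemma~\ref{lem:c_s}.
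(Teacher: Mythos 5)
Your argument is correct. It proves the same statement as the paper but organizes the computation differently: the paper's proof works entry by entry, expanding each product $\psi_k(1,\theta_j)\psi_l(1,\theta_j)$ and invoking a separate list of discrete summation identities (for $\sum_j\cos m\theta_j\cos n\theta_j$, etc.) with case distinctions on congruences modulo $N$, keeping the indices $N-m$ and $N-n$ unfolded throughout. You instead fold first, using $\bm{c}_{N-m}=\bm{c}_m$ and $\bm{s}_{N-m}=-\bm{s}_m$ from Lemma~\ref{lem:c_s} at the grid points $\theta_j=\phi_j$ (valid since $P=N$ in the case $\alpha=1$ treated here), so that each column of $\mathbf{G}$ collapses to a scalar multiple of a single member of the orthogonal family $\bm{c}_0,\bm{s}_1,\bm{c}_1,\ldots,\bm{s}_{(N-1)/2},\bm{c}_{(N-1)/2}$; diagonality and the diagonal entries then drop out of the orthogonality and norm statements of Lemma~\ref{lem:c_s} with no further case analysis. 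Both proofs ultimately rest on the same discrete orthogonality \eqref{eq:omega_sum}, but your column-level view is shorter and makes transparent \emph{why} $\mathbf{G}^\top\mathbf{G}$ is diagonal (the columns of $\mathbf{G}$ are an orthogonal system), whereas the paper's entry-level computation is more mechanical but generalizes more directly to the oversampled case $P=\alpha N$ with $\alpha\ge2$, where the folding identity must be applied with respect to the finer grid.
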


\begin{proof}
    By the definition of $\mathbf{G}$, we have
    \begin{equation}
        \mathbf{G}^\top\mathbf{G}=\left(\sum_{j=1}^N\psi_k(1,\theta_j)\psi_l(1,\theta_j)\right)_{k,l}\in\mathbb{R}^{N\times N},
    \end{equation}
    and
    \begin{equation}
        \psi_k(1,\theta_j)=\begin{dcases*}
            \sqrt{N}&if $k=1$,\\
            \sqrt{\frac{N}{2}}\left[\sin m\theta_j-\kappa^{N-2m}\sin(N-m)\theta_j\right]&if $k=2m$, $m=1,2,\ldots,\dfrac{N-1}{2}$,\\
            \sqrt{\frac{N}{2}}\left[\cos m\theta_j+\kappa^{N-2m}\cos(N-m)\theta_j\right]&if $k=2m+1$, $m=1,2,\ldots,\dfrac{N-1}{2}$.
        \end{dcases*}
        \label{eq:psi_1}
    \end{equation}
    Using an elementary relation~\eqref{eq:omega_sum}, it can be easily shown that the following hold:
    \begin{align}
        &\sum_{j=1}^N\sin m\theta_j=0,\quad m\in\mathbb{Z},\label{eq:sum_sin}\\
        &\sum_{j=1}^N\cos m\theta_j=\begin{dcases*}
            N&if $m\equiv0$,\\
            0&otherwise,
        \end{dcases*}\label{eq:sum_cos}\\
        &\sum_{j=1}^N\cos m\theta_j\cos n\theta_j=\begin{dcases*}
            \frac{N}{2}&if $(m-n\equiv0)\veebar(m+n\equiv0)$,\\
            N&if $(m-n\equiv0)\land(m+n\equiv0)$,\\
            0&otherwise,
        \end{dcases*}\label{eq:sum_coscos}\\
        &\sum_{j=1}^N\sin m\theta_j\sin n\theta_j=\begin{dcases*}
            \frac{N}{2}&if $(m-n\equiv0)\land(m+n\not\equiv0)$,\\
            -\frac{N}{2}&if $(m+n\equiv0)\land(m-n\not\equiv0)$,\\
            0&otherwise,
        \end{dcases*}\label{eq:sum_sinsin}\\
        &\sum_{j=1}^N\sin m\theta_j\cos n\theta_j=0,\quad m,n\in\mathbb{Z},\label{eq:sum_sincos}
    \end{align}
    where the symbol $\veebar$ denotes the exclusive or.

    Off-diagonal elements can be computed as in the following manner.
    $(1,2n)$ and $(1,2n+1)$ elements for $n=1,2,\ldots,(N-1)/2$ are
    \begin{align}
        [\mathbf{G}^\top\mathbf{G}]_{1,2n}
        &=\frac{N}{\sqrt{2}}\sum_{j=1}^N\left[\sin n\theta_j-\kappa^{N-2n}\sin(N-n)\theta_j\right]=0,\\
        [\mathbf{G}^\top\mathbf{G}]_{1,2n+1}
        &=\frac{N}{\sqrt{2}}\sum_{j=1}^N\left[\cos n\theta_j+\kappa^{N-2n}\cos(N-n)\theta_j\right]=0
    \end{align}
    by \eqref{eq:sum_sin} and \eqref{eq:sum_cos}.
    $(2m,2n)$ elements for $m=1,2,\ldots,(N-1)/2$ and $n=m+1,\ldots,(N-1)/2$ are
    \begin{align}
        [\mathbf{G}^\top\mathbf{G}]_{2m,2n}
        &=\frac{N}{2}\sum_{j=1}^N\left[\sin m\theta_j\sin n\theta_j-\kappa^{N-2n}\sin m\theta_j\sin(N-n)\theta_j\right.\\
        &\hspace{50pt}\left.-\kappa^{N-2m}\sin(N-m)\theta_j\sin n\theta_j+\kappa^{2(N-(m+n))}\sin(N-m)\theta_j\sin(N-n)\theta_j\right]\\
        &=0
    \end{align}
    by \eqref{eq:sum_sinsin}.
    $(2m,2n+1)$ elements for $m=1,2,\ldots,(N-1)/2$ and $n=m,\ldots,(N-1)/2$ are
    \begin{align}
        [\mathbf{G}^\top\mathbf{G}]_{2m,2n+1}
        &=\frac{N}{2}\sum_{j=1}^N\left[\sin m\theta_j\cos n\theta_j+\kappa^{N-2n}\sin m\theta_j\cos(N-n)\theta_j\right.\\
        &\hspace{30pt}\left.-\kappa^{N-2m}\sin(N-m)\theta_j\cos n\theta_j-\kappa^{2(N-(m+n))}\sin(N-m)\theta_j\cos(N-n)\theta_j\right]\\
        &=0
    \end{align}
    by \eqref{eq:sum_sincos}.
    $(2m+1,2n+1)$ elements for $m=1,2,\ldots,(N-1)/2$ and $n=m+1,\ldots,(N-1)/2$ are
    \begin{align}
        [\mathbf{G}^\top\mathbf{G}]_{2m+1,2n+1}
        &=\frac{N}{2}\sum_{j=1}^N\left[\cos m\theta_j\cos n\theta_j+\kappa^{N-2n}\cos m\theta_j\cos(N-n)\theta_j\right.\\
        &\hspace{30pt}\left.+\kappa^{N-2m}\cos(N-m)\theta_j\cos n\theta_j+\kappa^{2(N-(m+n))}\cos(N-m)\theta_j\cos(N-n)\theta_j\right]
    \end{align}
    by \eqref{eq:sum_coscos}.
    In the same way in computing $(2m,2n+1)$ elements, we can show that $(2m+1,2n)$ elements are equal to $0$ for $m=1,2,\ldots,(N-1)/2$ and $n=m+1,\ldots,(N-1)/2$.
    Summarizing the above, we have shown that upper triangular elements of $\mathbf{G}^\top\mathbf{G}$ are equal to $0$.
    Since $\mathbf{G}^\top\mathbf{G}$ is symmetric, it follows that $\mathbf{G}^\top\mathbf{G}$ is diagonal.

    Concerning diagonal elements, we have by \eqref{eq:sum_coscos} and \eqref{eq:sum_sinsin} that
    \begin{align}
        [\mathbf{G}^\top\mathbf{G}]_{11}
        &=\sum_{j=1}^NN
        =N^2,\\
        [\mathbf{G}^\top\mathbf{G}]_{2m,2m}
        &=\frac{N}{2}\sum_{j=1}^N\left[\sin^2m\theta_j-2\kappa^{N-2m}\sin m\theta_j\sin(N-m)\theta_j+\kappa^{2(N-2m)}\sin^2(N-m)\theta_j\right]\\
        &=\frac{N^2}{4}(1+\kappa^{N-2m})^2,\\
        [\mathbf{G}^\top\mathbf{G}]_{2m+1,2m+1}
        &=\frac{N}{2}\sum_{j=1}^N\left[\cos^2m\theta_j+2\kappa^{N-2m}\cos m\theta_j\cos(N-m)\theta_j+\kappa^{2(N-2m)}\cos^2(N-m)\theta_j\right]\\
        &=\frac{N^2}{4}(1+\kappa^{N-2m})^2
    \end{align}
    for $m=1,2,\ldots,(N-1)/2$.
\end{proof}

The above lemma shows that $\mathbf{G}^\top\mathbf{G}$ is non-singular since all the eigenvalues of $\mathbf{G}^\top\mathbf{G}$ are positive.
Namely, Theorem~\ref{thm:unique-existence} has been proved.

\begin{proof}[Proof of Theorem~\ref{thm:condition-number}]
    It is well known that the relation
    \begin{equation}
        \cond_2(\mathbf{A})=\sqrt{\frac{\lambda_{\max}(\mathbf{A}^\top\mathbf{A})}{\lambda_{\min}(\mathbf{A}^\top\mathbf{A})}}
        \label{eq:cond_2}
    \end{equation}
    holds, where $\lambda_{\max}(\mathbf{A}^\top\mathbf{A})$ and $\lambda_{\min}(\mathbf{A}^\top\mathbf{A})$ are the maximal and minimal (by moduli) eigenvalues of $\mathbf{A}^\top\mathbf{A}$, respectively.
    Lemma~\ref{lem:G} shows that
    \begin{equation}
        \lambda_{\max}(\mathbf{G}^\top\mathbf{G})=N^2,\quad
        \lambda_{\min}(\mathbf{G}^\top\mathbf{G})=\frac{N^2}{4}(1+\kappa^{N-2})^2
    \end{equation}
    hold.
    Therefore, the relation~\eqref{eq:cond_2} yields that
    \begin{equation}
        \cond_2(\mathbf{G})
        =\sqrt{\frac{\lambda_{\max}(\mathbf{G}^\top\mathbf{G})}{\lambda_{\min}(\mathbf{G}^\top\mathbf{G})}}
        =\frac{2}{1+\kappa^{N-2}}
        \longrightarrow2\quad(N\to\infty).
    \end{equation}
\end{proof}

The above corollary implies that the coefficient matrix $\mathbf{G}$ in the collocation equation~\eqref{eq:DSM-QR_collocaiton-equation-matrix} is well-conditioned, which is a quite difference compared with the original DSM.

\subsection{Error estimate}
\label{sec:error-estimate}

Since $\mathbf{G}^\top\mathbf{G}$ is diagonal by Lemma~\ref{lem:G}, the solution to \eqref{eq:collocation_linear-system_least-squares} is explicitly given by
\begin{equation}
    Q_k=\frac{1}{[\mathbf{G}^\top\mathbf{G}]_{kk}}\sum_{j=1}^N\psi_k(1,\theta_j)f(\rho\mathrm{e}^{\mathrm{i}\theta_j}),\quad
    k=1,2,\ldots,N.
    \label{eq:Q}
\end{equation}
To derive the error estimate, we write down the exact solution $u$ to the problem~\eqref{eq:BVP} using the Fourier series expansion as
\begin{equation}
    u(r\mathrm{e}^{\mathrm{i}\theta})=\sum_{n\in\mathbb{Z}}f_n\left(\frac{r}{\rho}\right)^{|n|}\mathrm{e}^{\mathrm{i}n\theta},
\end{equation}
where $f_n$ denotes the $n$-th Fourier coefficient of the function $\theta\mapsto f(\rho\mathrm{e}^{\mathrm{i}\theta})$.
We next represent $u^{(N)}$ as an infinite series using $\{f_n\}_{n\in\mathbb{Z}}$.
The concrete expression~\eqref{eq:Q} of coefficients $\{Q_k\}_{k=1}^N$ yields that
\begin{align}
    u^{(N)}(r\mathrm{e}^{\mathrm{i}\theta})
    &=\sum_{k=1}^N\left[\frac{1}{[\mathbf{G}^\top\mathbf{G}]_k}\sum_{j=1}^N\psi_k(1,\theta_j)f(\rho\mathrm{e}^{\mathrm{i}\theta_j})\right]\psi_k(r/\rho,\theta)\\
    &=\sum_{k=1}^N\frac{1}{[\mathbf{G}^\top\mathbf{G}]_k}\sum_{j=1}^N\psi_k(1,\theta_j)\left(\sum_{n\in\mathbb{Z}}f_n\mathrm{e}^{\mathrm{i}n\theta_j}\right)\psi_k(r/\rho,\theta)\\
    &=\sum_{n\in\mathbb{Z}}f_n\sum_{k=1}^N\frac{1}{[\mathbf{G}^\top\mathbf{G}]_k}\left(\sum_{j=1}^N\psi_k(1,\theta_j)\mathrm{e}^{\mathrm{i}n\theta_j}\right)\psi_k(r/\rho,\theta).
\end{align}

\begin{lemma}
    \label{lem:phi}
    It holds that
    \begin{align}
        \varphi_n^{(N)}(r/\rho,\theta)
        &\coloneqq\sum_{k=1}^N\left(\frac{1}{[\mathbf{G}^\top\mathbf{G}]_k}\sum_{j=1}^N\psi_k(1,\theta_j)\mathrm{e}^{\mathrm{i}n\theta_j}\right)\psi_k(r/\rho,\theta)\\
        &=\begin{dcases*}
            1&if $n\equiv0$,\\
            \frac{1}{1+\kappa^{N-2m}}\left(w^m+\kappa^{N-2m}\overline{w}^{N-m}\right)&if $n\equiv m$,\\
            \frac{1}{1+\kappa^{N-2m}}\left(\overline{w}^m+\kappa^{N-2m}w^{N-m}\right)&if $n\equiv-m$
        \end{dcases*}
    \end{align}
    with $m=1,\ldots,(N-1)/2$, where we set $w\coloneqq(r/\rho)\mathrm{e}^{\mathrm{i}\theta}$.
\end{lemma}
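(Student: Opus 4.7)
The plan is to evaluate $\varphi_n^{(N)}(s,\theta)$ by direct case analysis on the three structural types of basis functions indexed by $k=1$, $k=2m$, and $k=2m+1$ for $m=1,\dots,(N-1)/2$, exploiting the crucial fact that at the collocation nodes the two-term expressions for $\psi_k$ collapse to a single term. Concretely, since $(N-m)\theta_j = 2\pi j - m\theta_j$, one has $\sin((N-m)\theta_j) = -\sin(m\theta_j)$ and $\cos((N-m)\theta_j) = \cos(m\theta_j)$, so
\[
\psi_{2m}(1,\theta_j) = \sqrt{\tfrac{N}{2}}\,(1+\kappa^{N-2m})\sin(m\theta_j),\qquad \psi_{2m+1}(1,\theta_j) = \sqrt{\tfrac{N}{2}}\,(1+\kappa^{N-2m})\cos(m\theta_j).
\]
Note that no such cancellation occurs in the outer factor $\psi_k(r/\rho,\theta)$, since $\theta$ is arbitrary; it is precisely this asymmetry between the inner and outer evaluations that produces the two-term form $w^m + \kappa^{N-2m}\overline{w}^{N-m}$ in the final answer.

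Next, I would expand the sines and cosines as exponentials and invoke \eqref{eq:omega_sum} to compute $\sum_{j=1}^N\psi_k(1,\theta_j)e^{in\theta_j}$. This inner sum reduces to $\sum_{j=1}^N e^{i(n\pm m)\theta_j}$, which vanishes unless $n\equiv\pm m\pmod N$. Since $1\le m\le (N-1)/2$, the residues $\pm m$ exhaust each nonzero class mod $N$ exactly once, so for each $n$ at most one index pair $(2m,2m+1)$ is activated and all others contribute zero. The case $n\equiv 0$ is handled separately by the $k=1$ term, for which $\sum_j\psi_1(1,\theta_j)e^{in\theta_j} = N\sqrt{N}$, $[\mathbf{G}^\top\mathbf{G}]_{11} = N^2$, and $\psi_1(r/\rho,\theta) = \sqrt{N}$ combine to give $1$, matching the first line of the lemma.

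For $n\equiv m$ with $1\le m\le (N-1)/2$, the normalizations $[\mathbf{G}^\top\mathbf{G}]_{kk}=(N^2/4)(1+\kappa^{N-2m})^2$ from Lemma~\ref{lem:G} combine with the two factors of $\sqrt{N/2}$ and the surviving $N$ from the inner sum to telescope into the single scalar $1/(1+\kappa^{N-2m})$. Inside the brackets one then needs to recognize
\[
s^m(\cos m\theta + i\sin m\theta) = w^m,\qquad s^{N-m}(\cos(N-m)\theta - i\sin(N-m)\theta) = \overline{w}^{N-m},
\]
so that the $k=2m$ and $k=2m+1$ contributions assemble into $(w^m+\kappa^{N-2m}\overline{w}^{N-m})/(1+\kappa^{N-2m})$. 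The case $n\equiv -m$ differs only in that the surviving exponentials are $e^{+im\theta_j}$ rather than $e^{-im\theta_j}$, which swaps $w\leftrightarrow\overline{w}$ and yields the third line by complex conjugation.

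The main obstacle is purely bookkeeping: one must verify that the factor $-1/i = i$ produced by rewriting the sines combines with the minus sign inside $\psi_{2m}(r/\rho,\theta) = \sqrt{N/2}[s^m\sin m\theta - \kappa^{N-2m}s^{N-m}\sin(N-m)\theta]$ to produce a plus sign on $\kappa^{N-2m}\overline{w}^{N-m}$ (rather than a minus), and in parallel that the cosine contributions from $k=2m+1$ align with the same sign convention. Once the exponential expansion is written out and one tracks which of $e^{\pm im\theta_j}$ survives for each case of $n$, the rest is mechanical verification.
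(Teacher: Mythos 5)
Your proposal is correct and follows essentially the same route as the paper: compute $\sum_{j}\psi_k(1,\theta_j)\mathrm{e}^{\mathrm{i}n\theta_j}$ case by case via exponential expansion and the relation~\eqref{eq:omega_sum}, then assemble with the normalizations from Lemma~\ref{lem:G}. Your preliminary observation that $\sin((N-m)\theta_j)=-\sin(m\theta_j)$ and $\cos((N-m)\theta_j)=\cos(m\theta_j)$ collapse the node values to $\sqrt{N/2}\,(1+\kappa^{N-2m})\sin(m\theta_j)$ and $\sqrt{N/2}\,(1+\kappa^{N-2m})\cos(m\theta_j)$ is a pleasant streamlining the paper does not make explicit, but it leads to the same computation and the same sign bookkeeping, which you handle correctly.
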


\begin{proof}
    We first compute $\sum_{j=1}^N\psi_k(1,\theta_j)\mathrm{e}^{\mathrm{i}n\theta_j}$.
    The elementary relation~\eqref{eq:omega_sum} and expression~\eqref{eq:psi_1} yield that
    \begin{equation}
        \sum_{j=1}^N\psi_1(1,\theta_j)
        =\sqrt{N}\sum_{j=1}^N\mathrm{e}^{\mathrm{i}n\theta_j}
        =\begin{dcases*}
            N^{3/2}&if $n\equiv0$,\\
            0&otherwise
        \end{dcases*}
    \end{equation}
    for $k=1$,
    \begin{align}
        &\sum_{j=1}^N\psi_{2m}(1,\theta_j)\mathrm{e}^{\mathrm{i}n\theta_j}\\
        &=\sqrt{\frac{N}{2}}\sum_{j=1}^N\left(\frac{\mathrm{e}^{\mathrm{i}(m+n)\theta_j}-\mathrm{e}^{-\mathrm{i}(m-n)\theta_j}}{2\mathrm{i}}-\kappa^{N-2m}\frac{\mathrm{e}^{\mathrm{i}(N-(m-n))\theta_j}-\mathrm{e}^{-\mathrm{i}(N-(m+n))\theta_j}}{2\mathrm{i}}\right)\\
        &=\mathrm{i}\left(\frac{N}{2}\right)^{3/2}\times\begin{dcases*}
            1+\kappa^{N-2m}&if $n\equiv m$,\\
            -(1+\kappa^{N-2m})&if $n\equiv-m$,\\
            0&otherwise
        \end{dcases*}
    \end{align}
    for $k=2m$ with $m=1,\ldots,(N-1)/2$, and
    \begin{align}
        &\sum_{j=1}^N\psi_{2m+1}(1,\theta_j)\mathrm{e}^{\mathrm{i}n\theta_j}\\
        &=\sqrt{\frac{N}{2}}\sum_{j=1}^N\left(\frac{\mathrm{e}^{\mathrm{i}(m+n)\theta_j}+\mathrm{e}^{-\mathrm{i}(m-n)\theta_j}}{2}+\kappa^{N-2m}\frac{\mathrm{e}^{\mathrm{i}(N-(m-n))\theta_j}+\mathrm{e}^{-\mathrm{i}(N-(m+n))\theta_j}}{2}\right)\\
        &=\left(\frac{N}{2}\right)^{3/2}\times\begin{dcases*}
            1+\kappa^{N-2m}&if $(n\equiv m)\lor(n\equiv-m)$,\\
            0&otherwise
        \end{dcases*}
    \end{align}
    for $k=2m+1$ with $m=1,\ldots,(N-1)/2$.
    Therefore, it follows from the above result and Lemma~\ref{lem:G} that
    \begin{equation}
        \sum_{k=1}^N\frac{1}{[\mathbf{G}^\top\mathbf{G}]_k}\left(\sum_{j=1}^N\psi_k(1,\theta_j)\mathrm{e}^{\mathrm{i}n\theta_j}\right)\psi_k(r/\rho,\theta)
        =\frac{1}{[\mathbf{G}^\top\mathbf{G}]_1}\cdot N^{3/2}\psi_1(r/\rho,\theta)
        =1
    \end{equation}
    for $n\equiv0$, 
    \begin{align}
        &\sum_{k=1}^N\frac{1}{[\mathbf{G}^\top\mathbf{G}]_k}\left(\sum_{j=1}^N\psi_k(1,\theta_j)\mathrm{e}^{\mathrm{i}n\theta_j}\right)\psi_k(r/\rho,\theta)\\
        &=\left(\frac{N}{2}\right)^{3/2}(1+\kappa^{N-2m})\left(\frac{\mathrm{i}}{[\mathbf{G}^\top\mathbf{G}]_{2m}}\psi_{2m}(r/\rho,\theta)+\frac{1}{[\mathbf{G}^\top\mathbf{G}]_{2m+1}}\psi_{2m+1}(r/\rho,\theta)\right)\\
        &=\frac{1}{1+\kappa^{N-2m}}\left(w^m+\kappa^{N-2m}\overline{w}^{N-m}\right)
    \end{align}
    for $n\equiv m$, and
    \begin{align}
        &\sum_{k=1}^N\frac{1}{[\mathbf{G}^\top\mathbf{G}]_k}\left(\sum_{j=1}^N\psi_k(1,\theta_j)\mathrm{e}^{\mathrm{i}n\theta_j}\right)\psi_k(r/\rho,\theta)\\
        &=\left(\frac{N}{2}\right)^{3/2}(1+\kappa^{N-2m})\left(\frac{-\mathrm{i}}{[\mathbf{G}^\top\mathbf{G}]_{2m}}\psi_{2m}(r/\rho,\theta)+\frac{1}{[\mathbf{G}^\top\mathbf{G}]_{2m+1}}\psi_{2m+1}(r/\rho,\theta)\right)\\
        &=\frac{1}{1+\kappa^{N-2m}}\left(\overline{w}^m+\kappa^{N-2m}w^{N-m}\right)
    \end{align}
    for $n\equiv-m$.
\end{proof}

Using the functions $\varphi_n^{(N)}$ defined in Lemma~\ref{lem:phi}, we can represent $u^{(N)}(r\mathrm{e}^{\mathrm{i}\theta})$ as
\begin{equation}
    u^{(N)}(r\mathrm{e}^{\mathrm{i}\theta})=\sum_{n\in\mathbb{Z}}f_n\varphi_n^{(N)}(r/\rho,\theta).
\end{equation}
Since $u$ and $u^{(N)}$ are harmonic in $\mathbb{B}_\rho$ and continuous on $\overline{\mathbb{B}}_\rho$, the maximum principle gives the following bound:
\begin{align}
    \|u-u^{(N)}\|_{L^\infty(\mathbb{B}_\rho)}
    &\le\sum_{n\in\mathbb{Z}}|f_n|g_n^{(N)},
    \label{eq:error_bound}
\end{align}
where
\begin{equation}
    g_n^{(N)}\coloneqq\sup_{\theta\in\mathbb{R}}\left|\mathrm{e}^{\mathrm{i}n\theta}-\varphi_n^{(N)}(1,\theta)\right|.
\end{equation}

To derive the desired error estimate, the local and global estimates on $g_n^{(N)}$ are required.

\begin{lemma}
    \label{lem:g}
    \begin{enumerate}
        \item For any $n\in\mathbb{N}$, it holds that
        \begin{equation}
            g_n^{(N)}\le2.
        \end{equation}
        \item It holds that
        \begin{equation}
            g_0^{(0)}=0.
        \end{equation}
        Moreover, for $n=1,\ldots,(N-1)/2$, it holds that
        \begin{equation}
            g_n^{(N)}\le2\kappa^{N-2n}.
        \end{equation}
    \end{enumerate}
\end{lemma}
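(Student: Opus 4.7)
The plan is to unpack Lemma~\ref{lem:phi}'s closed-form expression for $\varphi_n^{(N)}(1,\theta)$ and combine it with the observation that, at the boundary ($r=\rho$), the variable $w=\mathrm{e}^{\mathrm{i}\theta}$ satisfies $|w|=1$. Both assertions then reduce to triangle-inequality estimates once one short algebraic rearrangement has been performed.

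For part (i), each of the three cases in Lemma~\ref{lem:phi} takes the form $(A+\kappa^{N-2m}B)/(1+\kappa^{N-2m})$ with $|A|=|B|=1$ (the case $n\equiv 0$ collapsing to $\varphi_n^{(N)}(1,\theta)=1$). Thus $|\varphi_n^{(N)}(1,\theta)|\le 1$, and the triangle inequality yields $g_n^{(N)}\le|\mathrm{e}^{\mathrm{i}n\theta}|+1=2$.

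For part (ii), the case $n=0$ is immediate because $\varphi_0^{(N)}(1,\theta)=1=\mathrm{e}^{\mathrm{i}\cdot 0\cdot\theta}$. For $1\le n\le (N-1)/2$, the residue of $n$ modulo $N$ is simply $m=n$, so Lemma~\ref{lem:phi} gives the identity
\begin{equation}
\mathrm{e}^{\mathrm{i}n\theta} - \varphi_n^{(N)}(1,\theta)
= w^n - \frac{w^n + \kappa^{N-2n}\overline{w}^{N-n}}{1+\kappa^{N-2n}}
= \frac{\kappa^{N-2n}}{1+\kappa^{N-2n}}\bigl(w^n-\overline{w}^{N-n}\bigr).
\end{equation}
Since $|w|=1$ forces $|w^n-\overline{w}^{N-n}|\le 2$ and $1/(1+\kappa^{N-2n})\le 1$, taking the supremum over $\theta$ produces $g_n^{(N)}\le 2\kappa^{N-2n}$.

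There is no substantive obstacle here; the only real content is recognizing that subtracting $\varphi_n^{(N)}(1,\theta)$ from $\mathrm{e}^{\mathrm{i}n\theta}$ cancels the dominant $w^n$ term and leaves a residual weighted by the small factor $\kappa^{N-2n}$. It is exactly this cancellation that will drive the exponential error decay in Theorem~\ref{thm:error_estimate}(iii) when the estimate is fed back into the bound~\eqref{eq:error_bound}.
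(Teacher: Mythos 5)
Your proof is correct and follows essentially the same route as the paper: both parts use the explicit formula for $\varphi_n^{(N)}(1,\theta)$ from Lemma~\ref{lem:phi}, the bound $|\varphi_n^{(N)}(1,\theta)|\le1$ via the triangle inequality for (i), and the same algebraic cancellation of the leading $w^n$ term followed by $1/(1+\kappa^{N-2n})\le1$ for (ii).
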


\begin{proof}
    (i)
    Lemma~\ref{lem:phi} yields that
    \begin{equation}
        \varphi_n^{(N)}(1,\theta)=\begin{dcases*}
            1&if $n\equiv0$,\\
            \frac{1}{1+\kappa^{N-2m}}\left(\mathrm{e}^{\mathrm{i}m\theta}+\kappa^{N-2m}\mathrm{e}^{-\mathrm{i}(N-m)\theta}\right)&if $n\equiv m$,\\
            \frac{1}{1+\kappa^{N-2m}}\left(\mathrm{e}^{-\mathrm{i}m\theta}+\kappa^{N-2m}\mathrm{e}^{\mathrm{i}(N-m)\theta}\right)&if $n\equiv-m$
        \end{dcases*}
    \end{equation}
    with $m=1,\ldots,(N-1)/2$.
    Therefore, the triangle inequality implies that $\varphi_n^{(N)}(1,\theta)\le1$ for any $n\in\mathbb{Z}$ and $\theta\in\mathbb{R}$.
    We thus obtain
    \begin{equation}
        g_n^{(N)}
        \le\sup_{\theta\in\mathbb{R}}\left(|\mathrm{e}^{\mathrm{i}n\theta}|+|\varphi_n^{(N)}(1,\theta)|\right)
        \le2.
    \end{equation}

    (ii)
    Since $\varphi_0^{(N)}(1,\theta)=1$ as shown in the above, we readily see that $g_0^{(N)}=0$ holds.
    For $n=1,\ldots,(N-1)/2$, we have
    \begin{align}
        \mathrm{e}^{\mathrm{i}n\theta}-\varphi_n^{(N)}(1,\theta)
        &=\mathrm{e}^{\mathrm{i}n\theta}-\frac{1}{1+\kappa^{N-2n}}\left(\mathrm{e}^{\mathrm{i}n\theta}+\kappa^{N-2n}\mathrm{e}^{-\mathrm{i}(N-n)\theta}\right)\\
        &=\frac{\kappa^{N-2n}}{1+\kappa^{N-2n}}\left(\mathrm{e}^{\mathrm{i}n\theta}-\mathrm{e}^{-\mathrm{i}(N-n)\theta}\right).
    \end{align}
    Hence, the triangle inequality yields that
    \begin{equation}
        \left|\mathrm{e}^{\mathrm{i}n\theta}-\varphi_n^{(N)}(1,\theta)\right|
        \le\frac{2\kappa^{N-2n}}{1+\kappa^{N-2n}}
        \le2\kappa^{N-2n},
    \end{equation}
    which is the desired estimate.
\end{proof}

\begin{proof}[Proof of Theorem~\ref{thm:error_estimate}]
    (i), (ii)
    Note that $\varphi_{-n}^{(N)}=\overline{\varphi_n^{(N)}}$ holds, which implies that $g_{-n}^{(N)}=g_n^{(N)}$.
    Based on this relation, we decompose the right-hand side of \eqref{eq:error_bound} as follows:
    \begin{equation}
        \sum_{n\in\mathbb{Z}}|f_n|g_n^{(N)}
        =|f_0|g_0^{(N)}+\sum_{n=1}^{\lfloor N/3\rfloor-1}(|f_n|+|f_{-n}|)g_n^{(N)}+\sum_{n=\lfloor N/3\rfloor}^\infty(|f_n|+|f_{-n}|)g_n^{(N)}
        \equiv\mathrm{I}+\mathrm{II}+\mathrm{III}.
    \end{equation}

    Since $g_0^{(N)}=0$ by Lemma~\ref{lem:g}(ii), we have $\mathrm{I}=0$.
    Concerning $\mathrm{II}$, since $|f_n|\le\|f\|_{L^\infty(\partial\mathbb{B}_\rho)}$ holds, Lemma~\ref{lem:g}(ii) implies that
    \begin{align}
        \mathrm{II}
        \le4\|f\|_{L^\infty(\partial\mathbb{B}_\rho)}\sum_{n=1}^{\lfloor N/3\rfloor-1}\kappa^{N-2n}
        \le4\|f\|_{L^\infty(\partial\mathbb{B}_\rho)}\kappa^N\cdot\frac{\kappa^{-2\lfloor N/3\rfloor}}{\kappa^{-2}-1}
        \le\frac{4\|f\|_{L^\infty(\partial\mathbb{B}_\rho)}}{\kappa^{-2}-1}\kappa^{\frac{N}{3}},
    \end{align}
    Lemma~\ref{lem:g}(i) yields that
    \begin{equation}
        \mathrm{III}
        \le2\sum_{n=\lfloor N/3\rfloor}^\infty(|f_n|+|f_{-n}|)
        =\begin{dcases*}
            \mathrm{o}(1)&for case (i),\\
            \mathrm{O}(N^{-\alpha+1})&for case (ii).
        \end{dcases*}
    \end{equation}

    Summarizing the above, we complete the proof of (i) and (ii).

    (iii)
    We decompose the right-hand side of \eqref{eq:error_bound} as
    \begin{equation}
        \sum_{n\in\mathbb{Z}}|f_n|g_n^{(N)}
        =|f_0|g_0^{(N)}+\sum_{n=1}^{(N-1)/2}(|f_n|+|f_{-n}|)g_n^{(N)}+\sum_{n=(N+1)/2}^\infty(|f_n|+|f_{-n}|)g_n^{(N)}
        \equiv\mathrm{I}+\mathrm{II}+\mathrm{III}.
    \end{equation}

    Since $g_0^{(0)}=0$ by Lemma~\ref{lem:g}(ii), we have $\mathrm{I}=0$.
    The assumption $|f_n|=\mathrm{O}(a^{|n|})$ ($|n|\to\infty$) assures the existence of positive constant $C$ satisfying $|f_n|\le Ca^{|n|}$ for all $n\in\mathbb{Z}$.
    Combining this relation with Lemma~\ref{lem:g}(ii), we have
    \begin{equation}
        \mathrm{II}
        \le4C\sum_{n=1}^{(N-1)/2}a^n\kappa^{N-2n}
        =4C\kappa^N\sum_{n=1}^{(N-1)/2}(a\kappa^{-2})^n
        \le
        4C\times\begin{dcases*}
            \frac{(a\kappa^{-2})^{1/2}}{a\kappa^{-2}-1}a^{N/2}&if $a>\kappa^2$,\\
            N\kappa^N&if $a=\kappa^2$,\\
            \frac{1}{1-a\kappa^{-2}}\kappa^N&if $a<\kappa^2$,
        \end{dcases*}
    \end{equation}
    where we have used
    \begin{equation}
        \sum_{n=1}^m\tau^n\le\begin{dcases*}
            \frac{\tau^{m+1}}{\tau-1}&if $\tau>1$,\\
            m&if $\tau=1$,\\
            \frac{1}{1-\tau}&if $0<\tau<1$.\\
        \end{dcases*}
    \end{equation}
    Concerning $\mathrm{III}$, the assumption $|f_n|=\mathrm{O}(a^{|n|})$ ($n\to\infty$) and Lemma~\ref{lem:g} imply that
    \begin{equation}
        \mathrm{III}
        \le4C\sum_{n=(N+1)/2}^\infty a^n
        =\frac{4C}{1-a}a^{(N+1)/2}.
    \end{equation}

    Combining the above estimates, we complete the proof of (iii).
\end{proof}

\section{Numerical experiments}
\label{sec:numerics}

In this section, we show several results of numerical experiments that compare the original MFS and MFS-QR.

\subsection{Disk}

We first consider the problem~\eqref{eq:BVP} with boundary value $f(x,y)=x^2y^3$.
We set parameters as $\rho\coloneqq1$ and $R\coloneqq1.1$ or $R\coloneqq1.5$.
We change $N$ as $N=2l+1$ with $l=1,2,\ldots,l_{\max}\coloneqq150$, and compute condition numbers and approximation errors.
\begin{figure}[tb]
    \begin{minipage}{.5\hsize}
        \centering
        \includegraphics[width=\hsize]{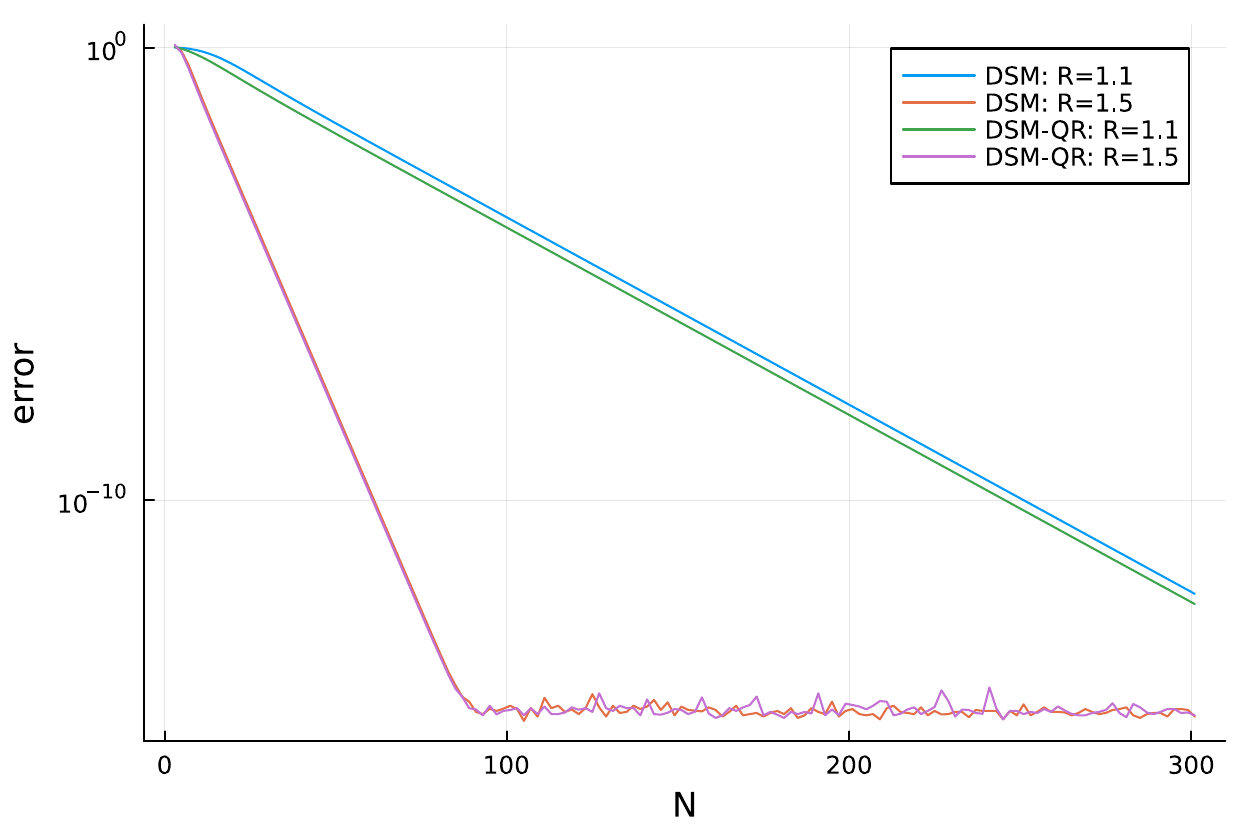}
    \end{minipage}
    \begin{minipage}{.5\hsize}
        \centering
        \includegraphics[width=\hsize]{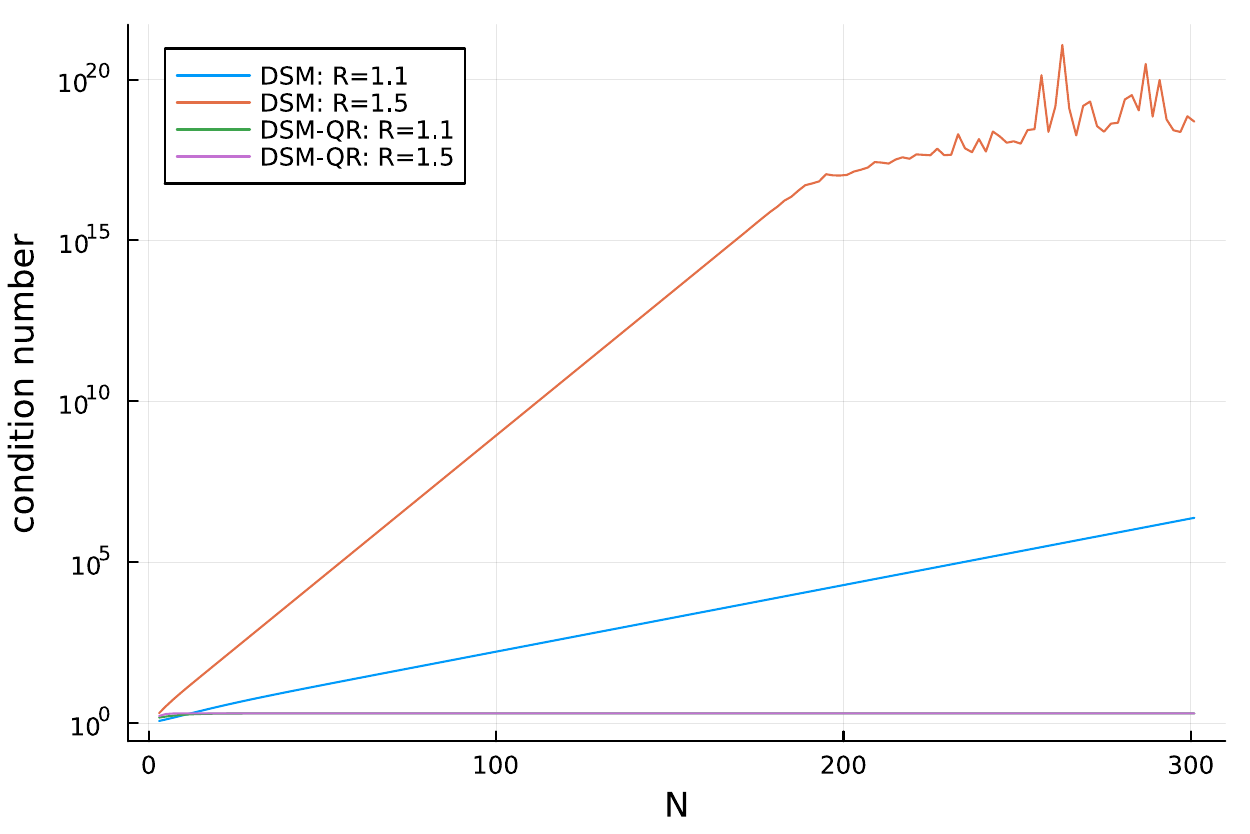}
    \end{minipage}%
    \caption{Plots of the $L^\infty$-norms of errors and the condition numbers.
    (Left) $L^\infty$-error; (Right) condition number when $\Omega$ is a disk region.}
    \label{fig:disk_error_cond-num}
\end{figure}
Figure~\ref{fig:disk_error_cond-num} shows the result of numerical experiments.
It can be observed that the $L^\infty$-norms of approximation errors decay exponentially with respect to $N$ for both the DSM and the DSM-QR.
The condition number increases exponentially for the DSM, while it is of $\mathrm{O}(1)$ for the DSM-QR.

\subsection{Jordan region}

We next deal with the case where $\Omega$ is a Jordan region.
In the previous study~\cite{antunes2018reducing}, it was pointed out that the MFS-QR reduces the ill-conditioning but cannot remove it completely when the problem region $\Omega$ is not a disk.
In this subsection, we develop an analogue of the MFS-QR for Jordan region with analytic boundary using conformal mappings.

Since $\Omega$ is a Jordan region, the Riemann mapping theorem assures the existence of a conformal map $\Psi\colon\mathbb{B}_1\rightarrow\Omega$.
Then, the Osgood--Carath\'eodory theorem extends $\Psi$ to a homeomorphism of $\overline{\Omega}$ onto $\overline{\mathbb{B}}_1$, denoted by the same symbol $\Psi$.
Since $\partial\Omega$ is analytic, we can extend $\Psi$ conformally to $\mathbb{B}_\tau$ with some $\tau>1$ by the Schwarz reflection principle.
As studied in \cite{katsurada1990asymptotic}, for the MFS, if we arrange the collocation points $\{z_j\}_{j=1}^N\in\partial\Omega$ and $\{\zeta_k\}_{k=1}^N\subset\mathbb{C}\setminus\overline{\Omega}$ by
\begin{alignat}{2}
    z_j&\coloneqq\Psi(\omega^j),&\quad&j=1,2,\ldots,N,\\
    \zeta_k&\coloneqq\Psi(R\omega^k),&\quad&k=1,2,\ldots,N
\end{alignat}
with $R\in(1,\tau)$ and $\omega\coloneqq\exp(2\pi\mathrm{i}/N)$, an approximate solution uniquely exists for sufficiently large $N$, and an approximation error decays exponentially for analytic boundary data.
The essense of the proof is to decompose the basis function into the disk part and the perturbation part as follows:
\begin{equation}
    \log|\Psi(r\mathrm{e}^{\mathrm{i}\theta})-\Psi(R\mathrm{e}^{\mathrm{i}\phi})|
    =\log\left|\frac{\Psi(r\mathrm{e}^{\mathrm{i}\theta})-\Psi(R\mathrm{e}^{\mathrm{i}\phi})}{r\mathrm{e}^{\mathrm{i}\theta}-R\mathrm{e}^{\mathrm{i}\phi}}\right|+\log|r\mathrm{e}^{\mathrm{i}\theta}-R\mathrm{e}^{\mathrm{i}\phi}|.
\end{equation}
Concerning the DSM, the author developed mathematical theory in \cite{sakakibara2016analysis}. 
The similar idea plays an essential role, and the basis function is decomposed as
\begin{equation}
    -\Re\left(\frac{\nu(\Psi(R\mathrm{e}^{\mathrm{i}\phi}))}{\Psi(r\mathrm{e}^{\mathrm{i}\theta})-\Psi(R\mathrm{e}^{\mathrm{i}\phi})}\right)
    =-\Re\left(\frac{\nu(\Psi(R\mathrm{e}^{\mathrm{i}\phi}))}{\Psi(r\mathrm{e}^{\mathrm{i}\theta})-\Psi(R\mathrm{e}^{\mathrm{i}\phi})}-\frac{\nu(R\mathrm{e}^{\mathrm{i}\phi})}{r\mathrm{e}^{\mathrm{i}\theta}-R\mathrm{e}^{\mathrm{i}\phi}}\right)-\Re\left(\frac{\nu(R\mathrm{e}^{\mathrm{i}\phi})}{r\mathrm{e}^{\mathrm{i}\theta}-R\mathrm{e}^{\mathrm{i}\phi}}\right),
\end{equation}
where $\nu(R\mathrm{e}^{\mathrm{i}\phi})$ denotes the unit outward normal vector to the circle $\partial\mathbb{B}_R$ at the point $R\mathrm{e}^{\mathrm{i}\phi}$, whereas $\nu(\Psi(R\mathrm{e}^{\mathrm{i}\phi}))$ does the one to the Jordan curve $\Psi(\partial\mathbb{B}_R)$ at $\Psi(R\mathrm{e}^{\mathrm{i}\phi})$.
The first-term in the right-hand side can be regarded as a good perturbation in some sense.
The second-term in the right-hand side is the basis function for the case of disk, so it can be modified via the DSM-QR.
Hence, it is natural to imagine that the set of functions
\begin{equation}
    \left\{-\Re\left(\frac{\nu(\zeta_k)}{\Psi(r\mathrm{e}^{\mathrm{i}\theta})-\zeta_k}-\frac{\omega^k}{r\mathrm{e}^{\mathrm{i}\theta}-R\omega^k}\right)+\psi_k(r/\rho,\theta)\right\}_{k=1}^N
\end{equation}
would be a new basis function, which removes the ill-conditioning completely.
Although there is no mathematical theory concerning the above idea, we below show several numerical experiments that exemplify the effectiveness of the above strategy.

\begin{example}
    \label{ex:Jordan_1}
    We first show the result where the conformal mapping is given by a polynomial:
    \begin{equation}
        \Psi(z)\coloneqq \frac{4}{5}z+\frac{1}{10}z^5.
    \end{equation}
    We use the same boundary value $f(x,y)=x^2y^3$, and set parameters as $R\coloneqq1.05$, $N=2l+1$, and $l_{\max}\coloneqq500$.
    \begin{figure}[tb]
        \centering
        \includegraphics[width=.7\hsize]{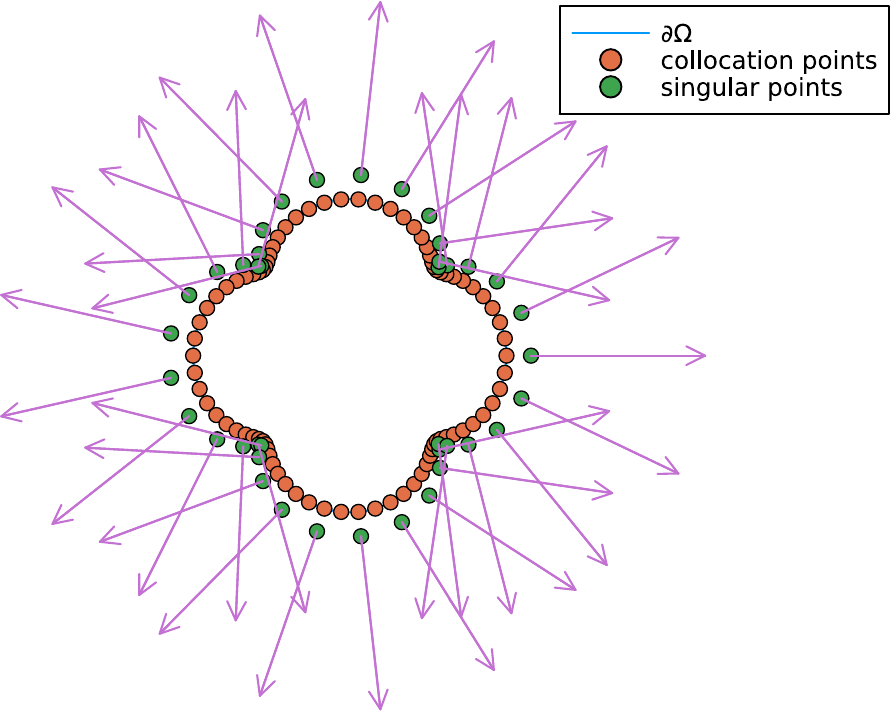}
        \caption{Arrangements of the collocation points, the singular points, and the dipole moments in Example~\ref{ex:Jordan_1}.}
        \label{fig:Jordan_1_collocation_singular}
    \end{figure}
    \begin{figure}[tb]
        \begin{minipage}{.5\hsize}
            \centering
            \includegraphics[width=\hsize]{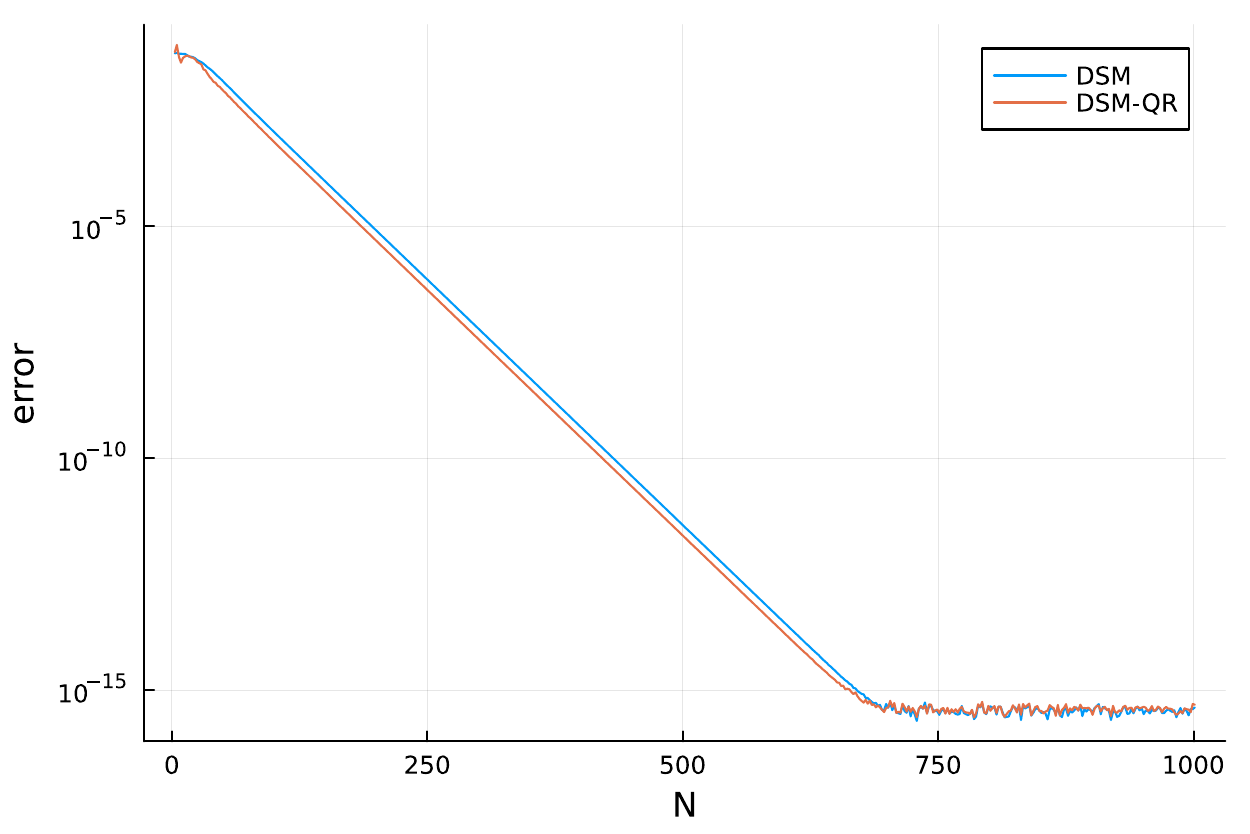}
        \end{minipage}
        \begin{minipage}{.5\hsize}
            \centering
            \includegraphics[width=\hsize]{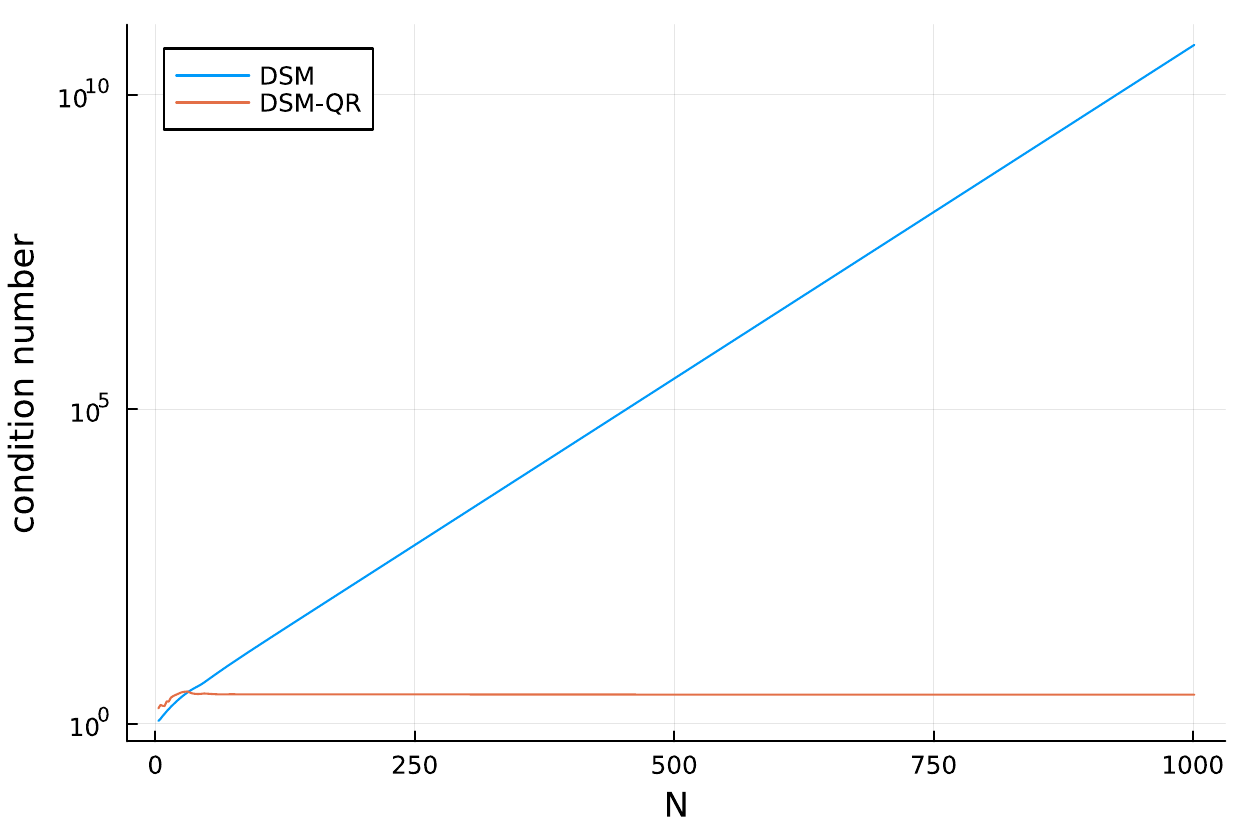}
        \end{minipage}%
        \caption{Plots of the $L^\infty$-norms of errors and the condition numbers in Example~\ref{ex:Jordan_1}.
        (Left) $L^\infty$-error; (Right) condition number.}
        \label{fig:Jordan_1_error_cond-num}
    \end{figure}
    Figure~\ref{fig:Jordan_1_error_cond-num} demonstrates results of numerical experiments.
    As in the case of disk, we can observe that the approximation error decays exponentially for both the DSM and the DSM-QR.
    Concerning the condition number, it increases exponentially for the DSM, while it is of $\mathrm{O}(1)$ for the DSM-QR.
    Namely, the aforementioned strategy works well.
\end{example}

\begin{example}
    \label{ex:Jordan_2}
    To strengthen the validity of the above idea, another numerical experiment will be performed.
    We consider the Joukowski-type transformation, defined as
    \begin{equation}
        \Psi(z)\coloneqq w+\frac{1}{2w},\quad
        w\coloneqq z-\frac{1}{5}+\frac{\mathrm{i}}{5}.
    \end{equation}
    We use the same boundary value $f(x,y)=x^2y^3$, and set parameters as $R\coloneqq1.1$, $N=2l+1$, and $l_{\max}\coloneqq500$.
    \begin{figure}[tb]
        \centering
        \includegraphics[width=.7\hsize]{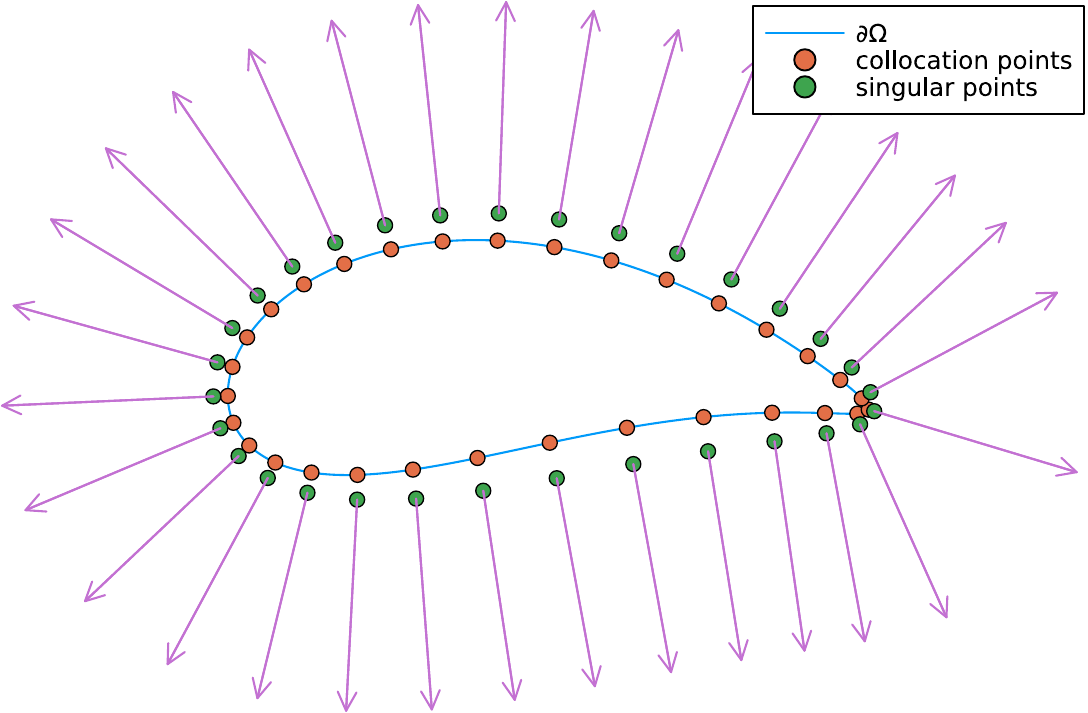}
        \caption{Arrangements of the collocation points, the singular points, and the dipole moments in Example~\ref{ex:Jordan_2}.}
        \label{fig:Jordan_2_collocation_singular}
    \end{figure}
    \begin{figure}[tb]
        \begin{minipage}{.5\hsize}
            \centering
            \includegraphics[width=\hsize]{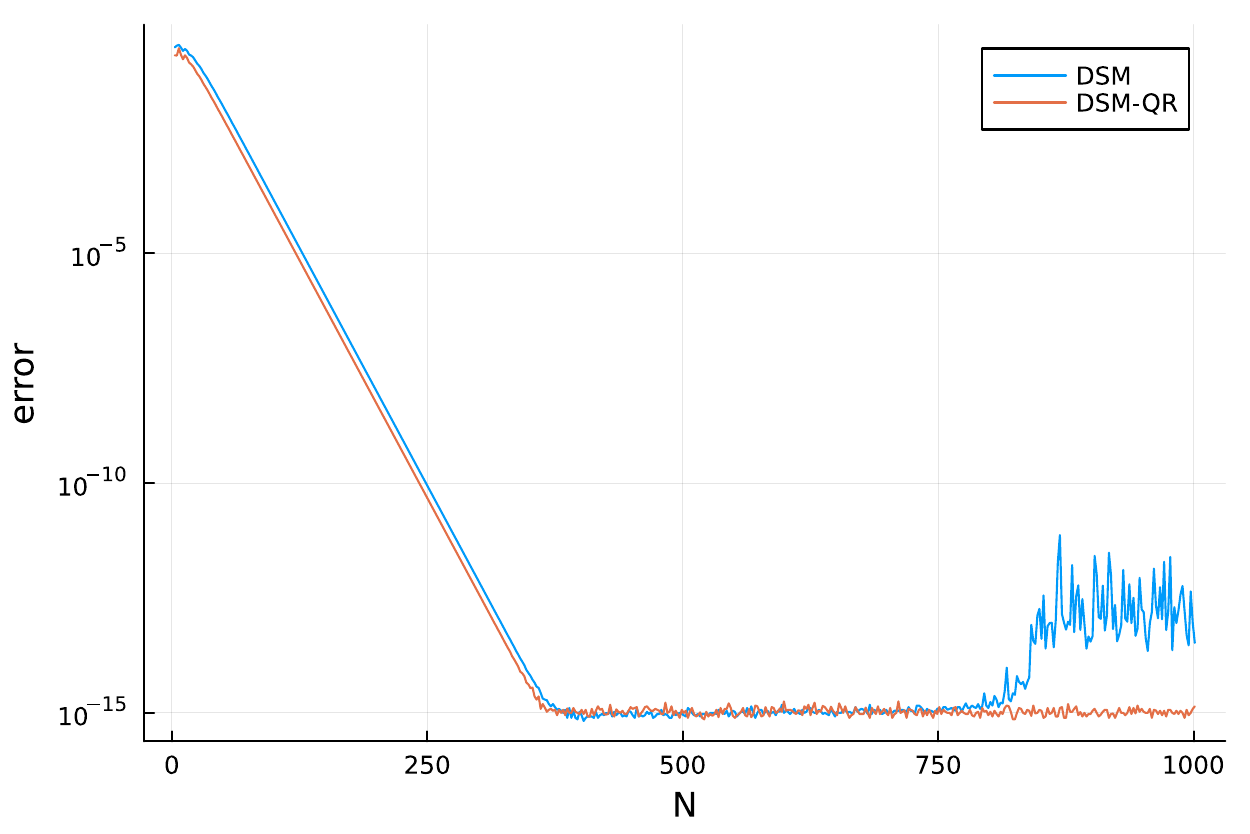}
        \end{minipage}
        \begin{minipage}{.5\hsize}
            \centering
            \includegraphics[width=\hsize]{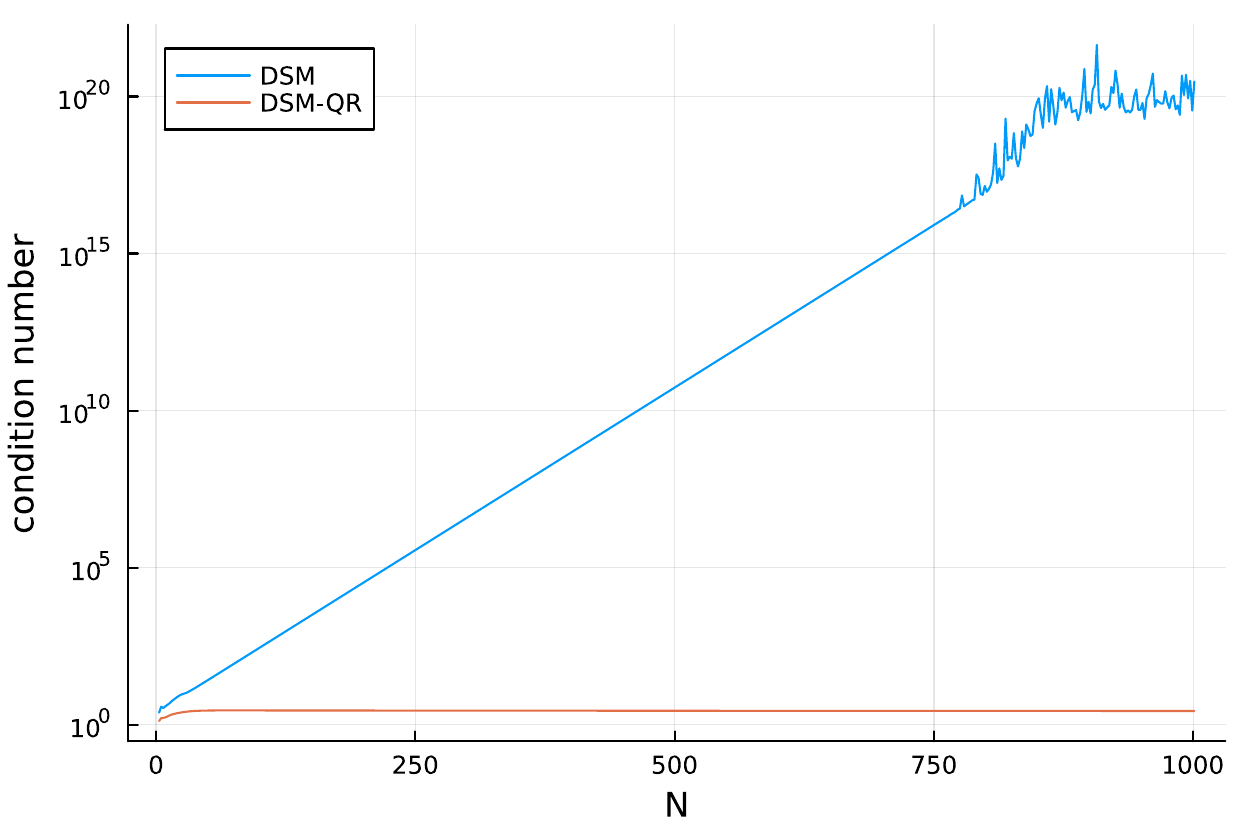}
        \end{minipage}%
        \caption{Plots of the $L^\infty$-norms of errors and the condition numbers in Example~\ref{ex:Jordan_2}.
        (Left) $L^\infty$-error; (Right) condition number.}
        \label{fig:Jordan_2_error_cond-num}
    \end{figure}
    In this case, the curvature of boundary $\partial\Omega$ is very large at the right end, and it is generally difficult to obtain a highly accurate approximate solution. 
    However, as shown in Figure~\ref{fig:Jordan_2_error_cond-num}, DSM and DSM-QR show exponential decay of approximation error. 
    This indicates that DSM and DSM-QR are even more powerful in regions with complex geometry. 
    Moreover, even in this case, the ill-conditionality is completely eliminated in the DSM-QR, which truly demonstrates the high usefulness of the DSM-QR.
\end{example}

\section{Concluding remarks}
\label{sec:summary}

This paper puts forth a new approach, designated as the DSM-QR, which draws inspiration from the MFS-QR. 
It then proceeds to undertake a comprehensive mathematical analysis of this proposed approach within the context of the disk region. 
Theorems~\ref{thm:unique-existence} and \ref{thm:error_estimate} demonstrate that the approximate solution exists uniquely and that the approximation error decays in accordance with the expected order in the context of the MFS. 
A noteworthy outcome is the mathematical proof presented in Theorem~\ref{thm:condition-number}, which demonstrates that the condition number is of $\mathrm{O}(1)$. 
Moreover, despite the absence of a mathematical analysis, we were able to extend the DSM-QR to the general Jordan region through the use of conformal mapping. 
In a previous study~\cite{antunes2022well-conditioned}, a method based on singular value decomposition was shown to be effective in removing ill-conditionality in star-shaped domains. 
However, the main feature of the method presented in this paper is that it is based on conformal mapping, which allows for domain shapes to be unrestricted.

Further research should be conducted in the following areas:
Firstly, the mathematical justification of the extension to Jordan regions should be considered. 
This can be achieved by applying the mathematical analysis methods of the DSM in the Jordan domain~\cite{sakakibara2016analysis}. 
An additional avenue for exploration is the expansion of the problem region in terms of its dimensionality. 
This is a direction that has been pursued by the MFS-QR and the MFS-SVD as well, but thus far, these methods have only demonstrated the ability to remove ill-conditionality in planar regions. 
They have not yet succeeded in extending this approach to domains of greater dimensionality, such as three-dimensional regions. 
Moreover, it would be advantageous to consider extensions to problems in multiply-connected domains, even if they are in two-dimensional regions, for practical applications. 
Finally, since the MFS-QR has been developed for the Helmholtz equation~\cite{antunes2018numerical}, it is crucial to investigate whether the DSM-QR can be extended to other equations as well.

\subsubsection*{Acknowledgements}

This work was supported by JSPS KAKENHI Grant Numbers JP18K13455, JP22K03425, JP22K18677, JP23H00086.

\bibliographystyle{siam}
\bibliography{MFS}

\end{document}